\DeclareMathOperator{\sech}{sech}
\newtheorem{Lemma}{Lemma}[section]
\newtheorem{Theorem}{Theorem}
\newtheorem{Proposition}[Lemma]{Proposition}
\newtheorem{Corollary}[Lemma]{Corollary}
\newtheorem{Remark}[Lemma]{Remark}
\newtheorem{Hypothesis}[Lemma]{Hypothesis}
\newcommand{\R}{\mathbb{R}}
\newcommand{\N}{\mathbb{N}}
\newcommand{\Z}{\mathbb{Z}}
\def\Re{\mathop{\mathrm{Re}}}
\def\coker{\mathop{\mathrm{Coker}\,}}
\newcommand{\rmO}{\mathrm{O}}
\newcommand{\rme}{\mathrm{e}}
\newcommand{\rmi}{\mathrm{i}}
\renewcommand{\ker}{\mathrm{Ker}\,}
\title{Numerical Methods for a Diffusive Class Nonlocal Operators
 \thanks{This work is supported in part by the National Science
  Foundation under grant DMS-1911742 (GJ).}}
\author{Loic Cappanera\thanks{Department of Mathematics, 
University of Houston, Houston, TX
(\email{lmcappan@central.uh.edu, gabriela@math.uh.edu, cward7@central.uh.edu}).}
\and Gabriela Jaramillo\footnotemark[1]
\and Cory Ward\footnotemark[1]}
\begin{document}
\emergencystretch 3em

\maketitle

\begin{abstract}
In this paper we develop a numerical scheme based on quadratures to approximate solutions
of integro-differential equations involving convolution kernels, $\nu$, of diffusive type.
In particular, we assume $\nu$ is symmetric and exponentially decaying at infinity.
We consider problems posed in bounded domains and in $\R$. In the case of bounded
domains with nonlocal Dirichlet boundary conditions, we show the convergence of 
the scheme for kernels that have positive tails, but that can take on negative values.
When the equations are posed on all of $\R$, we show that our scheme converges for 
nonnegative kernels. Since nonlocal Neumann boundary conditions lead to an equivalent
formulation as in the unbounded case, we show that these last results also apply to
the Neumann problem.
\end{abstract}

\begin{keywords}
nonlocal diffusion operator,  integro-differential equations,  
finite difference method, numerical approximation, convergence analysis.
\end{keywords}

\begin{AMS}
41A55 (approximate quadratures),
45A05 (linear integral equations),
45J05 (integro-differential equation),
45P05 (integral operators),
65R20 (numerical methods for integral equations).
\end{AMS}

\section{Introduction}
In this paper we are interested in developing numerical algorithms for
approximating nonlocal evolution processes of the form
\begin{equation}\label{e:evolution}
u_t(x,t) = \int_\R ( u(y,t) - u(x,t) ) \nu(x,y) \;dy + f(x), \quad x\in \Omega \subset \R,
\end{equation}
where $\nu(x,y) = \nu(|x-y|)$ is a symmetric, extended (no compact support), 
and exponentially decaying function of diffusive type, 
which is not necessarily positive.

Integro-differential equations like the one above appear, for example, as model 
equations for diffusion processes that occur over fast time scales. 
They can be derived by first considering a reaction diffusion system, and then
using a Green's function to approximate the fast variable in terms of the slow variables.
The result is a nonlocal equation (or system) involving a convolution kernel.
This kernel describes a dispersion processes
that lies somewhere between regular 
diffusion, as expressed by the Laplace operator, and anomalous diffusion,
 modeled for instance using the Fractional Laplacian. 
More precisely, when viewed as probability density functions
for a random walk, the kernels considered here have finite second moment. 
Generally, this implies that the process has
a characteristic length scale, and one might be tempted to switch the convolution kernel for
the Laplace operator with an appropriate diffusion constant.
 However, as has been shown (see for example 
  \cite{shima2004, scholl2014, volpert2006, volpert2017, sherratt2018}) this simplification
misses the true character of the fast diffusion process and precludes one from finding 
interesting behavior, like for example chimera states \cite{shima2004}.

Other examples of systems that can be described using equation \eqref{e:evolution} 
come from population dynamics
\cite{mollison1977, kot1996, carillo2005},
and oscillating chemical reactions
 \cite{tanaka2003, shima2004, garcia2008}.
 Variations of the above integro-differential
equation also appear in other physical systems where nonlocal effects
are important.
 For instance, the references \cite{alberti1998, masi1994, masi1995, bates1997, Bates2008} 
explore a nonlocal continuum model for phase transitions, and in
\cite{bodnar2006} a model for the evolution of a particle system is
presented.
Peridynamic models also give rise to integro-differential equations like the
ones studied here, although in this case the convolution kernel is often assumed 
to have compact support.
 These models have been the subject of large study and 
are one of the driving forces behind the development of numerical schemes
for nonlocal models, 
\cite{silling2000, silling2003, silling2008, askari2008, weckner2009, 
silling2010, madenci2014, bobaru2016, chen2011, du2010, tian2013}.
Finally, extensions of the above equation in which the integral operator is
nonlinear, are also typical of neural field models.
In this case, the linearization about the homogenous steady state has the form of 
equation ~\eqref{e:evolution},
see for example \cite{amari1977, ermentrout1979, bressloff2011} and \cite{coombes2014}.
Of course, the literature presented here is not exhaustive and is just meant to give 
a general idea of the breath of applications that use nonlocal models.

Although in most applications the set $\Omega$ represents a physical domain
that is bounded, when the phenomenon of interest occurs at small
spatial scales compared to the size of this domain, it is reasonable to
pose the equation on all of $\R$. This is the case for example when
showing existence of traveling waves in predator prey models
\cite{dagbovie2014}, or existence of target patterns solutions in
oscillating chemical reactions \cite{jaramillo2018}. 
On the other hand, when both scales are comparable 
and $\Omega$ is considered to be
a bounded subset of $\R$, boundary conditions need to be formulated
carefully. Given that the model is now described by an
integro-differential equation, it is not enough to prescribe the value
of the solution or its derivatives at the boundary. Instead, boundary
conditions take the form of volume constraints, see
\cite{gunzburger2012, gunzburger2013}. Indeed, in various applications
volume constraints can provide an equivalent notion to Dirichlet and
Neumann boundary conditions that, moreover, is consistent with the
assumptions made in deriving evolution equations of the form \eqref{e:evolution}.
See also the discussion in  Appendix \ref{s:calculus}.

In both cases, bounded and unbounded $\Omega$, one is interested in
validating and guiding the mathematical analysis using numerical
simulations. Currently, one approach to approximate equation
\eqref{e:evolution} is to use an exponential time
difference scheme, \cite{kassam2005}. This consists in
picking a large domain, applying the Fourier transform to the spatial
variable, and then using an RK4 method to advance the time steps while
computing any nonlinearities in real space. 
One of the disadvantages of this approach is that the implied
periodic boundary conditions are not always desired. 
Moreover, computations can become costly if one also
needs a small spatial discretization to resolve small scale
phenomena. 
Alternatively, if the kernel $\nu(x,y)$ has as its Fourier
symbol a fractional polynomial, then it is possible to precondition
the equation with an appropriate differential operator and obtain as a
result a PDE, see for example \cite{laing2005}.
 One can then proceed to solve the problem using 
 finite differences, and impose local Dirichlet or Neumann
boundary conditions. The main draw back from this approach is that
the type of convolution kernels one can consider is restricted.

The goal of this paper is to propose a numerical method based on
quadratures for computing steady states,
\begin{equation}\label{e:steady_state}
\int_\R ( u(x,t) - u(y,t) ) \nu(|x-y|) \;dy = f(x), \quad x \in \Omega
\subset \R.
\end{equation}
which, in contrast to the methods mentioned above, accounts for nonlocal boundary
conditions. In particular, we provide schemes for approximating
solutions to \eqref{e:steady_state} when
\begin{enumerate}[i)]
\item $\Omega \subset \R$ is bounded and we know the value of the
  solution in $\Omega^c$.
\item $\Omega = \R$ and we assume the algebraic decay of the solution.
\item $\Omega \subset \R$ is bounded and we know the algebraic decay of the solution and the nonlocal flux
 from $\Omega^c$ into $\Omega$.
\end{enumerate}

Our scheme is adapted from \cite{oberman2014}, where the authors look
at item i)  in the particular case when $\nu(|x-y|)$ is the integral kernel associated with
the Fractional Laplacian, $(-\Delta)^{\alpha/2}$, $0< \alpha< 2.$ 
Our main contribution is to extend this scheme 
and provide a proof of convergence for all three problems (i)-(ii)-(iii) for
a larger range of kernels, meaning kernels that satisfy Hypotheses 
\ref{h:analyticity} and \ref{h:multiplicity}
and that are therefore exponentially decaying and do not have compact support.
Moreover, in the case of problems with nonlocal Dirichlet boundary conditions, kernels are
allowed to take negative values.

Notice that the properties exhibited by our kernels are in direct contrast 
to those considered in most of the literature pertaining to the numerical 
approximation of nonlocal equations. Indeed, most numerical schemes
 deal with either the integral form of the factional Laplacian,
\cite{oberman2014, acosta2017, acosta2017b, duo2018, duo2019} 
or with nonlocal operators involving kernels that are positive and compactly supported
\cite{delia2013, tian2013, gunzburger2013, tao2017, delia2020}.
In addition, since the problems considered here are posed on the
whole real line, there are additional difficulties  not 
encountered when looking at bounded domains with, for example,  
Dirichlet boundary conditions, or at problems that involve positive kernels with compact support.
Mainly the issue to be addressed is how to approximate the solution outside the computational domain. 
On the other hand, from a theoretical point of view
 it is not immediately clear that solutions exists and are unique 
 when problem  ~\eqref{e:steady_state} is
 posed on all of $\R$. 
 The second main contribution of this paper is to
adapt previous results from \cite{jaramillo2018} to show 
that the assumption of algebraic decay for solutions to 
equation ~\eqref{e:steady_state} 
leads to a well posed problem,
 provided that the right hand side, $f$, also has sufficient decay and satisfies 
 some compatibility conditions (zero mean and zero first moment).

Our results are organized as follows. In Section \ref{s:problem_eq}, we present the 
different nonlocal diffusion problems and boundary conditions that are considered in this 
paper. Details on the derivation of the equation \eqref{e:evolution}, as a model for 
population dynamic, and how nonlocal Dirichlet and Neumann boundary condition can be 
naturally defined are provided in Appendix \ref{s:calculus}.
In Section \ref{s:diffusive_operators}, we prove that the
nonlocal problem \eqref{e:steady_state} is well posed if the
equation is defined on a particular class of weighted Sobolev spaces.
In particular, we derive conditions on the right hand side, $f$, that guarantee
existence of a unique solution. 
These results hold for a large class
 of kernels, and for problems defined either on the whole real line
 or on bounded domains with nonlocal Neumann boundary condition. 
In this section we also state conditions that guarantee the problem is well posed
when considering nonlocal Dirichlet boundary conditions.  
In Section \ref{s:NumReal} we adapt the
methods from \cite{oberman2014}
to the problems i), ii) and iii).
The convergence of the numerical schemes is
established in Section \ref{s:cvg_proof}.
Unlike most schemes proposed in literature, the convergence of the Dirichlet problem
is established for kernels that can take negative values assuming the kernel has a positive tail.
 Finally, in Sections \ref{s:Dirichlet}, 
\ref{s:real_line}, and \ref{s:Neumann} we provide examples for cases
i), ii), and iii), respectively.

\section{Nonlocal diffusion model and boundary conditions}
\label{s:problem_eq}

As mentioned in the introduction,
the derivation of equations \eqref{e:evolution} and \eqref{e:steady_state} 
has been done in various contexts, see again for example
\cite{andreu2010,gunzburger2012, gunzburger2013,fife2003}. 
In the case of diffusion problems,
the key idea is to extend the concept of flux across a boundary to a version of flux that includes short 
as well as long range movement of particles. This extension is explained in detail in references
\cite{gunzburger2012, gunzburger2013}. For completeness,  in
Appendix \ref{s:calculus} we summarize some of the results of the above references, and use 
 an example from populations dynamics to derive equation \eqref{e:evolution}.

In this paper, we concentrate on the 1-dimensional steady state problem with 
nonlocal diffusion operator $\mathcal{L}$ of the form:
\[\mathcal{L} \ast u  =  \int_{\R }   (u(x,t) -u(y,t))  \nu(x,y)) \;dy, \]
where the kernel $\nu$ is assumed to be symmetric, meaning that $\nu(x,y)=\nu(|x-y|)$,
and exponentially decaying. We study three types of problems that are either defined on a bounded domain $\Omega$ with nonlocal Dirichlet or Neumann boundary conditions, or defined on the whole real line $\R$. These problems can be described as follows:

\begin{itemize}
\item Dirichlet boundary conditions 
\begin{equation}\label{e:Dirichlet}
\begin{array}{c c c}
\mathcal{L} \ast u = f &\mbox{for }& x \in \Omega,
\\ 
u = g &\mbox{for}& x \in \Omega^c,
\end{array} \tag{DP}
\end{equation}
\item Neumann boundary conditions
\begin{equation}\label{e:Neumann}
\begin{array}{c c c}
\mathcal{L} \ast u = f &\mbox{for }& x \in \Omega,
\\
 \mathcal{L}\ast u = f_c &\mbox{for }& x \in \Omega^c,
\end{array} \tag{NP}
\end{equation}
\item problem defined on whole real line
\begin{equation}\label{e:Unbounded}
\begin{array}{c c c}
\mathcal{L} \ast u = f &\mbox{for }& x \in \R.
\end{array} \tag{RP}
\end{equation}
\end{itemize}

The main goal of this paper is to show that the above problems are well posed, and to introduce numerical methods that approximate the solutions to these problems. We note that non stationary problems can also be solved with the method presented in section
\ref{s:num_method} by incorporating a time stepping scheme such as an RK-4 method.

\begin{Remark}\label{r:neumann}
Notice that in the case when the flux is prescribed, the equations for
the steady state takes the form
\[\mathcal{L} \ast u  = \bar{f}(x) \quad x \in \R,\]
 where
\[ \bar{f}(x)  = \left \{ \begin{array}{c c c}
 f(x) & \mbox{for} & x \in \Omega,
 \\ 
 f_c(x) & \mbox{for} & x \in \Omega^c.
\end{array} \right.
\]
\end{Remark}

\section{Weighted Spaces and Well-Posedness}
\label{s:diffusive_operators}

In this section we recall the results from \cite{jaramillo2019} where
it is shown that under certain assumptions on the kernel $\nu(x,y)$,
operators defined by equation \eqref{e:steady_state} are Fredholm
operators.  These results rely on a special class of weighted Sobolev
spaces, which we recall first before stating the assumptions on
$\nu(x,y)$ and $f(x)$. 
Our goal for this section is
to show that the equation,
\begin{equation}\label{e:main}
\mathcal{L} \ast u = f(x) \quad x\in \R,
\end{equation}
is well posed.

\subsection{Notation and Weighted Sobolev Spaces}\label{s:weighted_spaces}

For $s \in \N \cup\{0\}$, $p \in (1,\infty)$, and $\gamma \in \R$, we
let $M^{s,p}_\gamma(\R)$ denote the space of locally summable, $s$
times weakly differentiable functions $u:\R \rightarrow \R$ endowed
with the norm
\[ \| u \|_{M^{s,p}_\gamma(\R)} = \sum_{j =0}^s \| \partial_x^j u \|_{L^p_{j+\gamma}(\R)}
\quad \text{where} \quad
\| u \|_{L^{p}_\gamma(\R)} = \| ( 1+ | x|^2)^{\gamma/2} u \|_{L^p(\R)}.
\]
It is clear that for values of $\gamma>0$ these spaces impose a
certain level of algebraic decay, whereas for values of $\gamma <0$
functions are allowed to grow algebraically. This definition also
allows for the following embeddings: $M^{s,p}_\gamma(\R) \subset M^{s,p}_\sigma(\R)$ 
provided $\gamma > \sigma$, and $M^{s,p}_\gamma(\R) \subset M^{k,p}_\gamma(\R)$ if $s>k$.

Notice that we can extend the above definition to non integer values
of $s$ by interpolation and to negative values of $s$ by duality.
For values of $p \in (1, \infty)$ these spaces are also reflexive, so
that $(M^{s,p}_\gamma(\R) )^* = M^{-s,q}_{-\gamma}(\R)$, where $p$
and $q$ are conjugate exponents. The pairing between $f \in L^p_{\gamma}(\R)$ 
and an element in the dual space $ g \in L^q_{-\gamma}(\R)$ is given by the usual integral
\[ \langle f, g\rangle =  \int_\R fg \;dx.\]
In addition, if $p=2$ then the spaces $M^{s,2}_\gamma(\R)$ are Hilbert
spaces with inner product
\[ (f,g) = \sum_{j =0}^s \int_\R \partial_x^j f  \partial_x^j g( 1+ | x|^2)^{(j +\gamma)}\;dx.\]
The following lemma describes the algebraic decay of functions
belonging to $M^{1,p}_\gamma$ for positive values of the parameter
$\gamma$.
\begin{Lemma}\label{l:decay}
Given $\gamma >0$, a function $f \in M^{1,p}_\gamma(\R)$ satisfies
$|f(x)| \leq \|f'\|_{L^p_{\gamma+1}} |x|^{1/q- (\gamma +1)}$ as $|x| \rightarrow \infty$.
\end{Lemma}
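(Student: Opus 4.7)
The plan is to derive the pointwise bound from an integral representation of $f$ using its weak derivative, together with a weighted Hölder estimate on the tail.

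First I would observe that the condition $\gamma>0$ together with $f\in L^p_\gamma(\R)$ forces $f$ to have no nonzero limit at infinity: any such limit $L$ would produce a tail $\int_R^\infty |L|^p(1+x^2)^{\gamma p/2}\,dx=\infty$, contradicting $f\in L^p_\gamma$. Combined with the absolute continuity implied by $f'\in L^p_{\gamma+1}(\R)$ (the weight is $\gamma+1>0$, so $f'$ is locally integrable), this allows me to write, for $x$ large positive,
\[
f(x) = -\int_x^\infty f'(t)\,dt,
\]
and an analogous identity at $-\infty$.

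Next I would apply Hölder's inequality after inserting the weight $(1+t^2)^{(\gamma+1)/2}$:
\[
|f(x)|\le \int_x^\infty |f'(t)|(1+t^2)^{(\gamma+1)/2}\,(1+t^2)^{-(\gamma+1)/2}\,dt
\le \|f'\|_{L^p_{\gamma+1}}\left(\int_x^\infty (1+t^2)^{-q(\gamma+1)/2}\,dt\right)^{1/q}.
\]
Since $\gamma>0$ one has $q(\gamma+1)>1$, so the remaining integral converges and, as $x\to\infty$, behaves like $\tfrac{1}{q(\gamma+1)-1}\,x^{1-q(\gamma+1)}(1+o(1))$. Raising to the $1/q$ power gives the stated decay rate $|x|^{1/q-(\gamma+1)}$, with an implicit constant from $(q(\gamma+1)-1)^{-1/q}$ being absorbed (or equivalently, by enlarging $x$ sufficiently, the leading constant can be pushed below $1$).

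The only real obstacle is the step that $f$ vanishes at infinity so that the representation $f(x)=-\int_x^\infty f'\,dt$ is valid; once this is established, the remaining argument is a direct application of Hölder's inequality and the elementary asymptotic of the tail of a polynomially decaying weight. The symmetric argument at $-\infty$ completes the proof.
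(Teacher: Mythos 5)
Your proposal is correct and follows essentially the same route as the paper's proof: write $f(x)=-\int_x^\infty f'(t)\,dt$, insert the weight $(1+t^2)^{(\gamma+1)/2}$, and apply H\"older's inequality to the tail, which yields the exponent $1/q-(\gamma+1)$. You supply more detail than the paper (justifying the vanishing of $f$ at infinity and computing the tail asymptotics explicitly), but the underlying argument is identical.
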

\begin{proof}
Since $\gamma>0$ we may write 
$|f(x)| \leq \int_\infty^x |f'(y) (1+y^2)^{(\gamma +1)/2}| (1+y^2)^{-(\gamma+1)/2} \;dy$. 
The result then follows from H\"older's inequality.
\end{proof}

{\bf Notation: } In this paper we will also use the symbol $W^{s,p}_\gamma(\R)$ to 
denote the space of locally summable, $s$ times weakly differentiable functions that 
are bounded under the norm
\[ \| u \|_{W^{s,p}_\gamma(\R)} = \sum_{j =0}^s \|( 1+ | x|^2)^{\gamma/2} \partial_x^j u  \|_{L^p(\R)}.\]
In the case when $p=2$ we will also write $H^s_\gamma(\R) = W^{s,2}_\gamma(\R)$.
Furthermore, we will use $\langle , \rangle$ to denote
the pairing between an element in $M^{k,p}_\gamma(\R)$ and its dual
$M^{-k,q}_{-\gamma}(\R)$, and $(,)$ to denote the inner product on the
Hilbert spaces $M^{k,2}_\gamma(\R)$.

\subsection{Nonlocal Diffusive Operators on the Real Line} 
In this section we let $L(k)$ denote the Fourier symbol of the
operator $\mathcal{L}$. Our main assumptions are
\begin{Hypothesis}\label{h:analyticity}
The domain of the multiplication operator, $L(k)$, 
can be extended to a strip in the complex plane,
$\Omega = \R \times (-\rmi k_0, \rmi k_0)$ for some sufficiently small and
positive $k_0 \in \R$, and on this domain the operator is uniformly
bounded and analytic. Moreover, there is a constant $k_m \in \R$
such that the operator $L(k)$ is invertible with uniform bounds for
$|\Re k| > k_m$.
\end{Hypothesis}

Note that because $L(k)$ is analytic its zeros are isolated.
We can therefore assume that:
 
\begin{Hypothesis}\label{h:multiplicity}
The multiplication operator $L(k)$ has a zero, $k^*$, of multiplicity
$m$ which we assume is at the origin. Therefore, the symbol $L(k)$
admits the following Taylor expansion near the origin.
 \[ L(k ) = \alpha (- \rmi k)^m + \rmO(k^{m+1}), \quad \mbox{for} \quad k \sim 0 \quad \alpha = \pm 1.\]
\end{Hypothesis}

\begin{Remark}
In this paper we will consider the particular case when $m =2$, so
that this last assumption specifies that the operator behaves very
much like the Laplacian for small wavenumbers, giving its diffusive
character.
\end{Remark}

\begin{Remark}
Given that $\nu(x,y) = \delta(x-y) - \mathcal{L}$, the analyticity of
the symbol $L(k)$ implies that $\nu(x,y)$ is exponentially
localized. Similarly, because $L(k)$ has a zero of multiplicity
$m$ at the origin, then the first $m-1$ moments of the kernel
$\delta(x-y) - \nu(x,y) $ must be zero, while the $m$-th moment must be
bounded.
\end{Remark}

As was shown in \cite{jaramillo2019}, under the above hypotheses the
convolution operator $\mathcal{L}$ is a Fredholm operator in an
appropriate weighted space. This means in particular that the operator
has a closed range and a finite dimensional kernel and cokernel.
Here we define the cokernel of an operator as the kernel of its adjoint.

 The results presented in \cite{jaramillo2019} apply to more general
operators defined over $L^2(\R, Y)$, where $Y$ is a separable Hilbert
space, and that commute with the action of translations on
$L^2(\R,Y)$. Here we consider the case when $Y = \R$ and summarize the
results from \cite{jaramillo2019} in this next theorem.

\begin{Theorem}\label{p:fredholmconv}
Let $p \in (1,\infty)$ with $q$ its conjugate exponent, and let $\gamma \in \R$ be such that 
$\gamma+m+1/p \notin \{1,\cdots,m\}$. Suppose as well that the convolution
operator $\mathcal{L}: M^{m,p}_\gamma(\R) \rightarrow W^{l,p}_{\gamma+m}(\R)$ 
satisfies Hypotheses \ref{h:analyticity} and \ref{h:multiplicity}. Then, with appropriate 
value of the integer $l$, the operator is Fredholm and
\begin{itemize}
\item for $\gamma <1-m - 1/p$ it is surjective with kernel spanned by $\mathbb{P}_m$;
\item for $\gamma > - 1 + 1/q$ it is injective with cokernel spanned by $\mathbb{P}_m$;
\item for $j -1 -m +1/q< \gamma <j +1-m - 1/p$ , where $j \in \mathbb{N}$, $1\leq j < m$,
 its kernel is spanned by $\mathbb{P}_{m-j}$ and its cokernel is spanned by $\mathbb{P}_j$.
\end{itemize}
Here $\mathbb{P}_m$ is the $m$-dimensional space of all
polynomials with degree less than $m$.
\end{Theorem}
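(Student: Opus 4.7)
My plan is to reduce the full convolution operator $\mathcal{L}$ to a canonical model whose weighted Fredholm theory is classical, and then to extract the kernel and cokernel via explicit polynomial bookkeeping in the weighted spaces. Since the symbol $L(k)$ satisfies Hypothesis \ref{h:analyticity}, it is uniformly invertible for $|\Re k|>k_m$ and analytic on a complex strip, so the only obstruction to inverting $\mathcal{L}$ by Fourier multiplication is the zero of order $m$ at the origin supplied by Hypothesis \ref{h:multiplicity}. Using the Taylor expansion
\[
L(k)=\alpha(-\rmi k)^m + \rmO(k^{m+1}),
\]
I would factor $L(k)=(-\rmi k)^m h(k)$ on a neighborhood of the origin with $h(0)=\alpha$, extend $h$ to a globally smooth, bounded, everywhere nonzero symbol, and use a smooth frequency partition of unity to split
\[
\mathcal{L}=\mathcal{L}_0+\mathcal{K},
\]
where $\mathcal{L}_0$ has symbol $(-\rmi k)^m h(k)$ and $\mathcal{K}$ is a Fourier multiplier supported away from the origin, where $L$ is already invertible.

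Next, I would reduce $\mathcal{L}_0$ to the classical model $\alpha(-\partial_x)^m$ by noting that multiplication by $h(k)$ is a bounded invertible Fourier multiplier on $L^p_\gamma(\R)$ for all admissible weights, since $h$ is smooth, bounded, and bounded below with no singularities to distort the polynomial weights. The Fredholm theory of $(-\partial_x)^m$ on the scale $M^{m,p}_\gamma(\R)\to L^p_{\gamma+m}(\R)$ is known from the Lockhart--McOwen / Nirenberg--Walker framework for asymptotically translation-invariant operators on weighted Sobolev spaces: the operator is Fredholm precisely when the weight $\gamma+m+1/p$ avoids the indicial values $\{1,\dots,m\}$, with polynomial obstructions of degrees $0,1,\dots,m-1$ appearing in the kernel or cokernel according to where $\gamma$ sits relative to these indicial values. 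Invoking this, $\mathcal{L}_0$ is Fredholm on the indicated spaces with an appropriate integer $l$ coming from the smoothing of the bounded frequency piece, and $\mathcal{K}$ is a compact perturbation between the same spaces. Hence $\mathcal{L}=\mathcal{L}_0+\mathcal{K}$ is Fredholm, exactly as carried out in \cite{jaramillo2019}.

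The kernel and cokernel are then identified by polynomial bookkeeping. Any $u$ in the kernel satisfies $\widehat{u}(k)L(k)=0$, so $\widehat{u}$ is supported at $k=0$ with multiplicity $m$, forcing $u\in\bbP_m$. A monomial $x^k$ lies in $M^{m,p}_\gamma(\R)$ precisely when $k+\gamma<-1/p$, so in the regime $\gamma<1-m-1/p$ all of $x^0,\dots,x^{m-1}$ belong to the space and the kernel is $\bbP_m$; in the regime $j-1-m+1/q<\gamma<j+1-m-1/p$ only $x^0,\dots,x^{m-j-1}$ survive, yielding kernel $\bbP_{m-j}$; and for $\gamma>-1+1/q$ none do, giving injectivity. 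The cokernel is the kernel of the adjoint $\mathcal{L}^{\ast}=\mathcal{L}$ (the symmetry of $\nu$ makes $L(k)$ real and even) acting $W^{-l,q}_{-\gamma-m}(\R)\to M^{-m,q}_{-\gamma}(\R)$; counting polynomials via the dual condition $k<\gamma+m-1/q$ produces $\bbP_j$ in the intermediate regime and $\bbP_m$ for $\gamma>-1+1/q$, matching the statement.

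The main obstacle will be establishing that the remainder $\mathcal{K}$ is genuinely compact from $M^{m,p}_\gamma(\R)$ into $W^{l,p}_{\gamma+m}(\R)$: a priori, a multiplier supported at bounded frequencies is smoothing but not automatically compact on weighted spaces, since one must gain algebraic decay at infinity as well as regularity. Here Hypothesis \ref{h:analyticity} is decisive, because the analyticity of the symbol on the strip $\R\times(-\rmi k_0,\rmi k_0)$ forces the corresponding convolution kernel to be exponentially localized, and convolution with an exponentially localized kernel improves algebraic decay by any finite amount, enabling a standard tightness and equicontinuity argument to deliver compactness. Once this is in hand, the Fredholm conclusion and the polynomial identification of kernel and cokernel reduce to the classical model case; the excluded weights $\gamma+m+1/p\in\{1,\dots,m\}$ are precisely those at which polynomials sit on the boundary of the weighted space and the model operator ceases to be Fredholm.
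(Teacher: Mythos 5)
Your reduction to a model operator is in the right spirit, but the additive decomposition $\mathcal{L}=\mathcal{L}_0+\mathcal{K}$ with $\mathcal{K}$ compact cannot be made to work, and this is where the argument breaks. A Fourier multiplier is translation invariant, and a nonzero translation-invariant operator is never compact between $M^{m,p}_\gamma(\R)$ and $W^{l,p}_{\gamma+m}(\R)$: take a fixed Schwartz bump $w$ whose Fourier transform meets the support of the symbol of $\mathcal{K}$, let $\tau_n w=w(\cdot-n)$ and $v_n=\tau_n w/\|\tau_n w\|_{M^{m,p}_\gamma}$; then $\mathcal{K}v_n=\tau_n(\mathcal{K}w)/\|\tau_n w\|_{M^{m,p}_\gamma}$, and since $\mathcal{K}w$ is again exponentially localized, $\|\mathcal{K}v_n\|_{W^{l,p}_{\gamma+m}}$ stays bounded away from zero while $\mathcal{K}v_n\rightharpoonup0$, so no subsequence converges. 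Your proposed rescue --- that convolution with an exponentially localized kernel ``improves algebraic decay by any finite amount'' --- is false: if $u\sim|x|^{-a}$ and $\nu$ has unit mass and exponential tails, then $\nu\ast u\sim|x|^{-a}$; the decay rate is preserved, not improved, so there is no gain in the weight and no tightness. There is also an inconsistency at high frequencies: if $h$ is bounded below, the symbol $(-\rmi k)^m h(k)$ grows like $|k|^m$, whereas $L(k)=1-\hat\nu(k)$ is bounded, so $\mathcal{K}$ would have a symbol growing like $|k|^m$ and could not even be bounded into $W^{l,p}_{\gamma+m}$ with $l\geq 0$.

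The paper circumvents both problems with a multiplicative factorization rather than a compact perturbation: Lemma \ref{l:decompositionconv} writes $L(k)=M_L(k)L_{NF}(k)=L_{NF}(k)M_R(k)$ with normal form $L_{NF}(k)=(-\rmi k)^m/(1\pm\rmi k)^l$ (the denominator tames the growth at infinity, which is exactly why the target space is $W^{l,p}_{\gamma+m}$) and with $M_{L/R}$ and their inverses analytic and uniformly bounded on the strip, hence isomorphisms of the weighted spaces. Fredholmness, kernel, and cokernel then transfer verbatim from Proposition \ref{p:fredholm2} for $(1\pm\partial_x)^{-l}\partial_x^m$, whose weighted theory is the Lockhart--McOwen-type result you cite. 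Your polynomial bookkeeping for which monomials lie in $M^{m,p}_\gamma(\R)$ and in the dual space is correct and reproduces the three regimes of the theorem, but it only identifies candidates for the kernel and cokernel; the closed-range statement has to come from the factorization, not from a compact perturbation.
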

The above results follows from Lemma \ref{l:decompositionconv} and
Proposition \ref{p:fredholm2}, which 
show that under the above hypotheses the convolution operators
considered here can be written as the composition of an invertible
operator and a Fredholm operator. These results can also be found in \cite{jaramillo2019}.

\begin{Lemma}\label{l:decompositionconv}
Let the multiplication operator $L(k)$ satisfy Hypothesis
\ref{h:analyticity}-\ref{h:multiplicity}. 
Then $L(k)$ admits the following decomposition:
\[ L(k) = M_L(k) L_{NF}(k) = L_{NF}(k) M_R(k),\]
where $L_{NF}(k) = (-\rmi k)^m / (1\pm \rmi k)^l$, while $M_{L/R}(\xi)$
and their inverses are analytic and uniformly bounded on $\Omega$.
\end{Lemma}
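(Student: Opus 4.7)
The plan is to take $l = m$ and set
\[
M_L(k) \;:=\; \frac{L(k)\,(1 \pm \rmi k)^{m}}{(-\rmi k)^{m}},
\]
choosing the sign so that $(1 \pm \rmi k)$ has no zero in $\Omega$; this is possible by shrinking $k_0 < 1$ if needed. Since $L(k)$ is scalar-valued, the same function $M_R := M_L$ realizes the right factorization, so a single identity suffices, and the claim reduces to verifying that $M_L$ and $M_L^{-1}$ are analytic and uniformly bounded on $\Omega$.

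First I would check analyticity of $M_L$ on $\Omega$. The only candidate singularity is the zero of order $m$ of $(-\rmi k)^{m}$ at the origin, and by Hypothesis \ref{h:multiplicity} the Taylor expansion $L(k) = \alpha(-\rmi k)^{m} + \rmO(k^{m+1})$ cancels it exactly, so that $L(k)/(-\rmi k)^{m}$ extends analytically through $k=0$ with value $\alpha = \pm 1$. Elsewhere on $\Omega$ the three factors are analytic for obvious reasons.

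Next I would establish uniform two-sided bounds on $|M_L|$, and hence on $|M_L^{-1}|$, by partitioning $\Omega$ into three regions: a small disk $\{|k| < \delta\}$ about the origin, the far-field strips $\{|\Re k| > k_m\}$, and the compact remainder. Near $k=0$ the Taylor expansion together with $M_L(0) = \alpha \neq 0$ gives two-sided control by continuity. On the far-field strips, the rational factor $(1 \pm \rmi k)^{m}/(-\rmi k)^{m}$ tends to a nonzero limit and is uniformly bounded above and below, while Hypothesis \ref{h:analyticity} supplies the analogous bounds on $|L(k)|$. On the intermediate region $M_L$ is continuous on a compact set, and the upper bound is automatic.

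The main obstacle is the lower bound on the intermediate region, which amounts to showing that $L$ has no zeros in $\Omega$ besides $k=0$. The hypotheses only provide non-vanishing for $|\Re k| > k_m$ and isolate a single zero at the origin, so a priori other zeros could live in the bounded rectangle $\{|\Re k|\le k_m,\ |\Im k|\le k_0\}$. To close the argument I would use that the zeros of the analytic function $L$ are isolated, so within the compact rectangle there are only finitely many, and then shrink $k_0$ (and enlarge $k_m$ slightly if required) to exclude all zeros other than $k=0$ from $\Omega$. Compactness of the resulting annular region then produces the uniform positive lower bound on $|L|$ needed to bound $M_L^{-1}$, completing the proof.
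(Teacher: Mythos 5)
The paper does not actually prove this lemma---it defers to \cite{jaramillo2019}---so there is no in-text argument to compare against. Your construction is the standard and natural one: with $l=m$ (which, as you implicitly use, is forced by requiring both $M_L$ and $M_L^{-1}$ to stay bounded as $|\Re k|\to\infty$, since $(1\pm\rmi k)^l/(-\rmi k)^m\sim |k|^{l-m}$), you set $M_L=L\cdot L_{NF}^{-1}$, remove the singularity at the origin via the Taylor expansion in Hypothesis~\ref{h:multiplicity}, use Hypothesis~\ref{h:analyticity} for the far field, and handle the remaining compact region by continuity. The observation that $M_R=M_L$ in the scalar case is also correct; the left/right distinction only matters in the operator-valued setting of the cited reference.

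The one step that does not quite close is your treatment of possible extra zeros of $L$ in the bounded rectangle $\{|\Re k|\le k_m,\ |\Im k|\le k_0\}$. Shrinking $k_0$ removes only those zeros with $\Im k\neq 0$; a nonzero \emph{real} zero $k_1\in[-k_m,k_m]\setminus\{0\}$ remains in the strip no matter how thin you make it, and enlarging $k_m$ only weakens the invertibility hypothesis rather than helping. Nothing in Hypotheses~\ref{h:analyticity}--\ref{h:multiplicity} as literally stated excludes such a zero (e.g.\ a symbol vanishing also at $k=\pm 1$ satisfies both), and for such an $L$ the claimed uniform lower bound on $|M_L|$, and indeed the Fredholm conclusions of Theorem~\ref{p:fredholmconv}, would fail. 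So you should either state explicitly that you read Hypothesis~\ref{h:multiplicity} as asserting that $k^*=0$ is the \emph{only} zero of $L$ on the real axis (which is the intended reading, since ``a zero'' is singular and the kernel/cokernel in the Fredholm theorem are purely polynomial), or add that assumption; with it in place, your shrinking argument disposes of the off-axis zeros and the compactness argument gives the required lower bound, completing the proof.
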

 
 \begin{Proposition}\label{p:fredholm2}
Let $m$ and $l$ be non negative integers, and $p \in (1, \infty)$ with $q$ its conjugate exponent.
Then, the operator
\[ (1\pm \partial_x)^{-l}\partial^m_x : M^{m,p}_{\gamma}(\R) \longrightarrow W^{l,p}_{\gamma+m}(\R)\]
is Fredholm for $\gamma + m+1/p \notin \{ 1, \cdots, m\}$. In particular,
\begin{itemize}
\item for $\gamma <1-m - 1/p$ it is surjective with kernel spanned by $\mathbb{P}_m$;
\item for $\gamma > - 1 + 1/q$ it is injective with cokernel spanned by $\mathbb{P}_m$;
\item for $j -1 -m +1/q< \gamma <j +1-m - 1/p$ , where $j \in \mathbb{N}$, $1\leq j < m$, 
its kernel is spanned by $\mathbb{P}_{m-j}$ and its cokernel is spanned by $\mathbb{P}_j$.
\end{itemize}
For $\gamma + m+ 1/p \in {1,\cdots,m}$ the operator does not have a
closed range.  Here $\mathbb{P}_m$ is the $m$-dimensional space of all
polynomials with degree less than $m$.
\end{Proposition}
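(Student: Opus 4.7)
The plan is to reduce the Fredholm analysis to that of the pure differentiation operator $\partial_x^m$. The prefactor $(1\pm\partial_x)^{-l}$ is convolution with an exponentially decaying one-sided kernel (iterated $l$ times), so by Young-type estimates it acts as a topological isomorphism $L^p_\sigma(\R)\to W^{l,p}_\sigma(\R)$ for every $\sigma\in\R$, since polynomial weights are preserved under convolution with exponentially decaying functions. Consequently the composite $(1\pm\partial_x)^{-l}\partial_x^m$ inherits its kernel, cokernel, and closed-range status directly from $\partial_x^m:M^{m,p}_\gamma(\R)\to L^p_{\gamma+m}(\R)$, so it suffices to analyze the latter operator.

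For the kernel, note that the distributional kernel of $\partial_x^m$ is the space of polynomials of degree less than $m$, and an elementary integrability check shows that the monomial $x^k$ belongs to $L^p_{k+\gamma}(\R)$ precisely when $k+\gamma<-1/p$. Thus the kernel inside $M^{m,p}_\gamma(\R)$ is spanned by those $x^k$ with $0\le k<m$ and $k<-\gamma-1/p$, giving $\mathbb{P}_m$, $\mathbb{P}_{m-j}$, or $\{0\}$ in the three stated parameter regimes. For the cokernel I would use duality: since $(M^{m,p}_\gamma)^*=M^{-m,q}_{-\gamma}$ and the formal adjoint is $(-1)^m\partial_x^m$, an element $v\in L^q_{-(\gamma+m)}(\R)$ annihilates the range iff $\partial_x^m v=0$ distributionally; the analogous integrability constraint $k<\gamma+m-1/q$ on monomials then produces the three stated cokernel dimensions.

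The main technical step is to prove the range is closed when $\gamma+m+1/p\notin\{1,\ldots,m\}$, which I would do by exhibiting a bounded right-inverse. Given $g\in L^p_{\gamma+m}(\R)$ in the annihilator of the cokernel, construct an iterated antiderivative
\[ u(x)=\int_{x_*^{(1)}}^{x}\!\int_{x_*^{(2)}}^{y_1}\!\cdots\!\int_{x_*^{(m)}}^{y_{m-1}} g(y_m)\,dy_m\cdots dy_1, \]
with each base point $x_*^{(i)}\in\{-\infty,0,+\infty\}$ chosen according to the decay regime so that the compatibility moment conditions forced by annihilation of the cokernel produce the requisite cancellations at infinity. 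Verifying $u\in M^{m,p}_\gamma(\R)$ then reduces to a chain of weighted Hardy-type inequalities, which hold exactly in the asserted parameter range and fail due to logarithmic divergences precisely at the excluded values $\gamma+m+1/p\in\{1,\ldots,m\}$, where the range becomes dense but not closed. The main obstacle is thus establishing these sharp weighted Hardy estimates, which requires a careful splitting of the integration region and precise tracking of decay exponents both near infinity and near the origin.
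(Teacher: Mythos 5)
The paper itself does not prove this proposition: it is imported verbatim from \cite{jaramillo2019}, and the surrounding text only offers the heuristic about Weyl sequences and the zero eigenvalue embedded in the essential spectrum. So there is no in-paper argument to compare against line by line; what can be said is that your route is the natural one and is consistent with the strategy the paper does display (Lemma~\ref{l:decompositionconv} factors a general symbol into an invertible multiplier times the normal form $(-\rmi k)^m/(1\pm \rmi k)^l$, and your first step --- peeling off $(1\pm\partial_x)^{-l}$ as an isomorphism $L^p_\sigma\to W^{l,p}_\sigma$ via weighted Young/Peetre estimates for the one-sided exponential kernel --- is exactly the same philosophy applied one more time). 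Your kernel and cokernel counts check out: the condition $x^k\in M^{m,p}_\gamma$ reduces, for every derivative order $j$, to the single integrability constraint $k+\gamma<-1/p$, and the dual computation with $L^q_{-(\gamma+m)}$ gives $k<\gamma+m-1/q$; these reproduce $\mathbb{P}_m$, $\mathbb{P}_{m-j}$, $\mathbb{P}_j$, and the trivial spaces in the three stated regimes, and the excluded set $\gamma+m+1/p\in\{1,\dots,m\}$ is precisely where these strict inequalities degenerate.

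The gap is that the one step carrying all the analytic content --- closedness of the range --- is asserted rather than proved. Identifying the annihilator of the range with polynomials only characterizes the \emph{closure} of the range; to conclude that the cokernel is finite-dimensional and the operator Fredholm you must actually produce the bounded right inverse, i.e.\ prove the weighted Hardy-type inequality
\[
\Bigl\| (1+|x|^2)^{\gamma/2}\!\int_{x_*}^{x} g(y)\,dy \Bigr\|_{L^p} \le C\,\|(1+|x|^2)^{(\gamma+1)/2} g\|_{L^p}
\]
(with the base point and, where the cokernel is nontrivial, the moment conditions on $g$ chosen per regime), iterated $m$ times with the weight exponent shifting by one at each stage. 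This is where the thresholds $\gamma+m+1/p\in\{1,\dots,m\}$ actually enter, one per antiderivative, and it is also where you must be careful that in the surjective regime no moment conditions are available, so the antiderivatives converge to nonzero constants or polynomials at infinity that must themselves be shown to lie in $M^{m,p}_\gamma$. One small overclaim to delete: at the critical values the range need not become \emph{dense} --- it merely fails to be closed, and the annihilator computation still produces nontrivial functionals. As an outline your proposal is sound and matches what the cited reference does; as a proof it is incomplete until the sharp weighted Hardy estimates are established.
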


Heuristically, the main reason why operators of the form 
$(1 \pm \partial_x)^{-\ell}\partial_x^m$ are not Fredholm in regular Sobolev
spaces is because they have a zero eigenvalue embedded in their
essential spectrum. In particular, this means that 
one can use the corresponding eigenfunction to construct Weyl sequences
and consequently show that the operator does not have closed range.

For example, consider the one dimensional Laplacian
 $\partial_x^2: H^2(\R) \longrightarrow L^2(\R)$. 
Its nullspace is spanned by $\{1,x\}$, and although these functions are not in
$ H^2(\R) $, one can use them to construct Weyl sequences. For example,
let $u_n = \chi( |x|/n) $, where  $\chi(|x|)$ is a smooth radial function
 equal to one when  $|x| <1$, and  equal to zero when $|x|>2$. 
 Notice that this sequence does not converge in $H^2(\R)$. 
 However $\| \partial_x^2 u_n \|_{L^2} \rightarrow 0$ as $n \rightarrow \infty$,  
 showing that the operator does not have a closed range.

The reason for considering $\partial_x^2 :M^{s,2}_\gamma(\R) \longrightarrow L^2_{\gamma+2}(\R) $ 
is that by picking large positive values of $\gamma$, and thus imposing algebraic decay,
one no longer has the result $\| \partial_x^2 u_n \|_{L^2_{\gamma+2}} \rightarrow 0$. 
On the other hand, by picking negative values of $\gamma$ and allowing algebraic growth,
the sequence $u_n = \chi( |x|/n) $ no longer converges to an element in the domain $M^{s,2}_\gamma(\R) $. 

In this paper we will restrict ourselves to functions $f(x)$ in
weighted Sobolev spaces, $L^p_{\gamma}(\R)$, that impose a high degree of
algebraic decay. As a result our convolution operators will have a two
dimensional cokernel spanned by at most $\{1, x\}$ (since we are
assuming $m=2$). 

The goal for us is to reformulate the problem so that we deal with an invertible
operator. This means that we will look at the following system
\begin{equation}\label{e:main1}
 \mathcal{L} \ast u + a_1 \mathcal{L} \ast P_1(x) + a_2 \mathcal{L} \ast P_2(x) = f(x) \quad x\in \R,
\end{equation}
where $f(x) \in L^p_\gamma(\R)$ is given, $u(x)$ and $a_i \in \R$,
with $ i \in \{1,2\} $, represent the variables we want to solve for,
and $\mathcal{L} \ast P_i(x) \in C^\infty(\R),$ are functions that
span the cokernel of our operator. In particular we require
\[ \langle \mathcal{L} \ast P_1, 1\rangle = \int_\R \mathcal{L} \ast P_1(x) \;dx =1
 \qquad \langle \mathcal{L} \ast P_1, x \rangle = \int_\R \mathcal{L} \ast  P_1(x) \cdot x \;dx =0 \]
\[ \langle \mathcal{L} \ast P_2, 1\rangle = \int_\R \mathcal{L} \ast P_1(x) \;dx =0 
\qquad \langle \mathcal{L} \ast P_2, x \rangle = \int_\R \mathcal{L} \ast P_2(x) \cdot x \;dx =1 \]

For example, one may pick $P_1(x) = (1/2) \log(\cosh (x))$ and $P_2(x) = \partial_x P_1(x) = (1/2)\tanh(x)$.

The above discussion leads to the following proposition.
\begin{Proposition}\label{p:L_invertible}
Given $\gamma > -1 + 1/p$, the convolution operator $\mathcal{L}$ with
Fourier symbol $L(k) = M_L(k) (ik)^2/ (1+ k^2)$ and defined as
\[\begin{array}{c c c}
 \mathscr{L}: M^{2,p}_{\gamma}(\R) \times \R\times \R & \longrightarrow & W^{2,p}_{\gamma+2}(\R) \\ 
 (u, a_1, a_2) & \mapsto & \mathcal{L} \ast ( u + a_1 P_1 + a_2 P_2)
 \end{array}
\]
is invertible, and therefore well defined.
\end{Proposition}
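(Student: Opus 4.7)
The plan is to deduce Proposition~\ref{p:L_invertible} directly from the Fredholm dichotomy of Theorem~\ref{p:fredholmconv} applied with $m=2$. Under the standing hypothesis on $\gamma$ we land in the regime in which $\mathcal{L}: M^{2,p}_\gamma(\R) \to W^{2,p}_{\gamma+2}(\R)$ is Fredholm of index $-2$, injective, with two-dimensional cokernel spanned by the polynomials $1$ and $x$. Equivalently, its range is the closed, codimension-two subspace
\[ R(\mathcal{L}) = \{\, h \in W^{2,p}_{\gamma+2}(\R) : \langle h,1\rangle = \langle h,x\rangle = 0\,\}. \]
The role of the two scalar unknowns $a_1,a_2$ together with the functions $\mathcal{L}\ast P_1, \mathcal{L}\ast P_2$ is to furnish an explicit topological complement of $R(\mathcal{L})$ inside the target. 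The four normalization identities stated just before the proposition are exactly the statement that the $2\times 2$ pairing matrix $(\langle \mathcal{L}\ast P_i, x^{j-1}\rangle)_{i,j=1,2}$ equals the identity, so $\{\mathcal{L}\ast P_1,\mathcal{L}\ast P_2\}$ is a basis of a complement of $R(\mathcal{L})$ and $\mathscr{L}$ is the natural direct-sum candidate for an isomorphism.

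For injectivity of $\mathscr{L}$, assume $\mathcal{L}\ast u + a_1\mathcal{L}\ast P_1 + a_2\mathcal{L}\ast P_2 = 0$ and pair the identity with $1$ and with $x$: the term $\mathcal{L}\ast u$ lies in $R(\mathcal{L})$ and therefore vanishes against both test functions, and the normalization of $P_1,P_2$ immediately forces $a_1=a_2=0$; then injectivity of $\mathcal{L}$ from Theorem~\ref{p:fredholmconv} yields $u=0$. For surjectivity, given $f \in W^{2,p}_{\gamma+2}(\R)$, define $a_1 := \langle f,1\rangle$ and $a_2 := \langle f,x\rangle$ and observe that by construction $\tilde f := f - a_1\mathcal{L}\ast P_1 - a_2\mathcal{L}\ast P_2$ annihilates both $1$ and $x$, hence belongs to $R(\mathcal{L})$; Theorem~\ref{p:fredholmconv} then delivers a unique $u \in M^{2,p}_\gamma(\R)$ with $\mathcal{L}\ast u = \tilde f$. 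Continuity of the inverse is automatic from the open mapping theorem once bijectivity is established.

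The principal technical point requiring care is verifying that the evaluations $f\mapsto\langle f,1\rangle$ and $f\mapsto\langle f,x\rangle$ are bounded linear functionals on $W^{2,p}_{\gamma+2}(\R)$. This is a weighted H\"older estimate: one needs the test functions $1$ and $x$ to lie in the dual weighted space $L^q_{-(\gamma+2)}(\R)$, which at infinity reduces to $(\gamma+1)q>1$, the same inequality that places us in the regime where the cokernel of $\mathcal{L}$ is genuinely $\{1,x\}$ in Theorem~\ref{p:fredholmconv}; in particular, the two constructions are consistent. A secondary verification is that $\mathcal{L}\ast P_1$ and $\mathcal{L}\ast P_2$ actually belong to the target $W^{2,p}_{\gamma+2}(\R)$: Taylor-expanding $P_i$ about the base point inside $\int(P_i(x)-P_i(y))\nu(x-y)\,dy$, using the oddness cancellation of the linear term under the symmetric kernel, together with the exponential decay of $P_i''$ (where $P_1'' = \tfrac12\sech^2 x$ and $P_2'' = -\sech^2 x\tanh x$) and the exponential localization of $\nu$ provided by Hypothesis~\ref{h:analyticity}, shows that $\mathcal{L}\ast P_i$ is smooth and exponentially decaying and thus lies in the target space for any value of $\gamma$.
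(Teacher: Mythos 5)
Your proof is correct and follows essentially the route the paper itself takes (the paper offers no formal proof beyond the discussion preceding the proposition): apply the injectivity/cokernel dichotomy of Theorem~\ref{p:fredholmconv} with $m=2$ and border the operator with $\mathcal{L}\ast P_1,\mathcal{L}\ast P_2$, whose normalization against $\{1,x\}$ is precisely the identity pairing matrix, so the bordered map is a bijection. Note only that your argument (correctly) requires $\gamma>-1+1/q$ so that $1$ and $x$ are bounded functionals on $W^{2,p}_{\gamma+2}(\R)$, consistent with Theorem~\ref{p:fredholmconv} and Corollary~\ref{c:invertible_con}, whereas the proposition as printed reads $\gamma>-1+1/p$, which appears to be a typo in the paper rather than a gap in your proof.
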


We also have the following corollary, which gives conditions on the
right hand side of equation \eqref{e:main} that guarantee existence of
solutions. This result is a consequence of the previous proposition
and Lemma \ref{l:decay}.
\begin{Corollary}\label{c:invertible_con}
Let $s \in \Z \cup [2, \infty)$, and $p \in (1, \infty)$ with $q$ its conjugate exponent.
 Consider the convolution operator $\mathcal{L}$, with Fourier symbol
$L(k) = M_L(k) (ik)^2/ (1+ k^2)$, and defined as
\[\begin{array}{c c c}
 \mathscr{L}: M^{s,p}_{\gamma}(\R) & \longrightarrow & W^{s,p}_{\gamma+2}(\R)\\ 
 u &\longmapsto & \mathcal{L} \ast u .
 \end{array}
\]
Suppose $\gamma > -1 + 1/q$, then the equation $\mathcal{L} \ast u =f$ 
has a unique solution, with $|u(x)| < C |x|^{1-1/p-(\gamma+1)}$ for
large $|x|$, provided the right hand side $f(x) \in W^{s,p}_{\gamma+2}(\R) $ satisfies
\[ \langle f, 1 \rangle = 0 \qquad \mbox{and} \qquad \langle f, x \rangle = 0.\]

\end{Corollary}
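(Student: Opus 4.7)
The plan is to reduce the corollary to the already-established invertibility of the extended system in Proposition~\ref{p:L_invertible} and to show that, under the two moment conditions on $f$, the auxiliary coefficients $a_1$ and $a_2$ must vanish. The three ingredients I will combine are: (a) the Fredholm description of $\mathcal{L}$ on $M^{m,p}_\gamma(\R)$ from Theorem~\ref{p:fredholmconv}, which for our parameters says that the cokernel is spanned by $\{1,x\}$; (b) the normalizations $\langle \mathcal{L}\ast P_i,\,x^{j-1}\rangle = \delta_{ij}$ built into the choice of $P_1,P_2$; and (c) Lemma~\ref{l:decay} for the pointwise tail bound.

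First, treat the base case $s=2$. Given $f\in W^{2,p}_{\gamma+2}(\R)$, Proposition~\ref{p:L_invertible} produces a unique triple $(u,a_1,a_2)\in M^{2,p}_\gamma(\R)\times\R\times\R$ with
\[
\mathcal{L}\ast u + a_1\,\mathcal{L}\ast P_1 + a_2\,\mathcal{L}\ast P_2 \;=\; f.
\]
Now pair both sides with the constant function $1$ and with $x$. Since $\gamma > -1 + 1/q$, Theorem~\ref{p:fredholmconv} (with $m=2$) tells us that the cokernel of $\mathcal{L}$ acting on $M^{2,p}_\gamma(\R)$ is spanned by $\{1,x\}$, so $\langle \mathcal{L}\ast u,1\rangle = \langle \mathcal{L}\ast u,x\rangle = 0$ for every $u$ in the domain. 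Using the prescribed normalizations of $\mathcal{L}\ast P_1$ and $\mathcal{L}\ast P_2$, the two pairings collapse to $a_1 = \langle f,1\rangle$ and $a_2 = \langle f,x\rangle$. The assumed moment conditions therefore force $a_1=a_2=0$, and the triple reduces to $(u,0,0)$, which is exactly the statement that $u$ solves $\mathcal{L}\ast u = f$. Uniqueness is inherited from injectivity of $\mathcal{L}$ in this range of $\gamma$ (Theorem~\ref{p:fredholmconv}). The pointwise estimate is then immediate from Lemma~\ref{l:decay}: $u\in M^{2,p}_\gamma(\R)$ gives $|u(x)|\le \|u'\|_{L^p_{\gamma+1}}|x|^{1/q-(\gamma+1)}$, which is the claimed bound since $1-1/p = 1/q$.

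To pass from $s=2$ to general $s\in \Z\cup[2,\infty)$, I would bootstrap using the translation invariance of the operator. For $s>2$, applying $\partial_x^j$ to $\mathcal{L}\ast u = f$ yields $\mathcal{L}\ast(\partial_x^j u) = \partial_x^j f \in W^{s-j,p}_{\gamma+2}(\R)$, and the moment conditions on $\partial_x^j f$ follow by integration by parts (boundary contributions at infinity vanish by the decay of $u$ and $f$); thus one may apply the base case to each $\partial_x^j u$ and accumulate the resulting bounds into the $M^{s,p}_\gamma$ norm. For negative integer $s$, the same conclusion is obtained by duality: the adjoint of $\mathcal{L}$ has the same symbol and satisfies the same hypotheses, so the analogous statement on the dual spaces $M^{-s,q}_{-\gamma}$ transfers back through the pairing.

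I expect the genuinely delicate step to be the regularity upgrade, specifically verifying that each differentiation (or each duality step) still lands in the Fredholm range of Theorem~\ref{p:fredholmconv}, i.e.\ that the shifted weight index $\gamma + m + 1/p$ never hits the forbidden integers $\{1,\dots,m\}$. In the present setting $m=2$ and the standing assumption $\gamma > -1 + 1/q$ keeps us strictly inside the ``injective with cokernel $\mathbb{P}_2$'' regime, so no threshold is crossed; once this is observed, the rest of the argument is bookkeeping on the norms.
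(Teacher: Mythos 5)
Your argument is exactly the route the paper intends: the paper's only stated justification for this corollary is that it ``is a consequence of the previous proposition and Lemma~\ref{l:decay},'' and you correctly fill in the key detail, namely that pairing the extended equation of Proposition~\ref{p:L_invertible} against the cokernel elements $1$ and $x$ (both pairings being finite precisely because $\gamma>-1+1/q$) forces $a_1=\langle f,1\rangle=0$ and $a_2=\langle f,x\rangle=0$, after which Lemma~\ref{l:decay} gives the pointwise decay since $1/q=1-1/p$. The bootstrapping remarks for general $s$ go beyond what the paper records but are consistent with it, so the proposal is correct and essentially identical in approach.
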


\begin{Remark}
Notice that by Lemma \ref{l:decay}, if the function $f \in
W^{2,p}_{\gamma+2}(\R)$, then for large $|x|$ we have that 
$ |f (x)| < \|f'\|_{L^p_{\gamma+2}} |x|^{1/q-( \gamma + 3)}$, 
where $p$ and $q$ are conjugate exponents.
\end{Remark}

\begin{Remark}
If in addition to Hypotheses \ref{h:analyticity} and \ref{h:multiplicity},
the kernel $\nu(x) \in L^2(\R)$, and the equation ~\eqref{e:main} is posed on
a bounded domain, then 
\[ \mathcal{L} \ast u = u(x) - \int_\R \nu(|x-y|) u(y) \;dy = u(x) - \int_\Omega \nu(|x-y|) u(y) \;dy, \]
defines an operator $\mathcal{L} : L^2(\Omega) \rightarrow L^2(\Omega)$ 
which is a compact perturbation of the identity. 
This follows since the integral in the above expression corresponds to a Hilbert-Schmidt operator.
As a result, problem (DP)  has a unique solution.
\end{Remark}

\begin{Remark}
The above results can be extended to operators $\mathcal{L}$ defined
on lattices. The discussion follows again from the results
presented in \cite{jaramillo2019}, it is summarizes in appendix
\ref{a:well_posedness_nonlocal_lattice} for completeness.
\end{Remark}

\section{A Numerical Method  for nonlocal diffusive operators}\label{s:NumReal}

In this section, we extend  the discretization scheme presented 
in Huang and Oberman's  paper \cite{oberman2014} so it is valid for problems
with more general kernels defined on the whole real line.
In particular, the method presented here applies to equations of the form
\begin{equation}\label{e:main2}
 \mathcal{L} \ast u (x) = \int_\R (u(x) - u(x-y) )\nu(y) \; dy = f(x)
 \quad x \in \R
 \end{equation}
where $\nu(y)$ is an exponentially localized kernel, so that the Fourier
symbol $L(k)$ satisfies Hypotheses \ref{h:analyticity} and
 \ref{h:multiplicity} with $m =2$.
In terms of the moments of $\nu(y)$, these assumptions lead to
\begin{align}\label{e:ass1}
\int_\R \nu(y) \;dy = 1 & \quad, \quad  \int_\R \nu(y) y \; dy = 0.
\end{align}
Additionally, in order to bound the local truncation error in the numerical schemes, we also make the following assumptions
\begin{align}\label{e:ass2}
 \int_\R \nu(y) y^2 \; dy < \infty & \quad, \quad  \int_\R \nu(y) y^4 \; dy < \infty \quad, \quad   u \in C^4(\R).
\end{align}

In Remark \ref{r:neumann}, we noted that equation \eqref{e:main2} also
encompasses problems posed on a bounded domain with nonlocal Neumann
boundary conditions. Thus, the results presented in this section also
apply to this type of situations.
Similarly, the result presented here also apply to Dirichlet boundary problems where the equation \eqref{e:main2} is considered on a bounded domain $\Omega \subset \R$.
 
\subsection{Discretization of the Operator}\label{ss:num_scheme}
As a first step, we set up a numerical grid defined by $x_i=ih$, 
$i \in \mathbb{Z}$ and $h>0$. We then split Eq.~\eqref{e:main2} into a
(possibly) singular part and a tail:
\begin{equation*} 
\mathcal{L} \ast u(x) = \int^h_{-h} [u(x) - u(x-y)] \nu(y) \; dy 
+ \int_{|y| \geq h}[u(x) - u(x-y)] \nu(y) \; dy.
\end{equation*}
 We denote the first and second integral by $\mathcal{L}_S \ast u(x)$ and $\mathcal{L}_T \ast u(x)$, respectively.

\subsubsection{Discretization of the Singular Integral}
We first rewrite the singular integral, considering it as a Cauchy
P.V.:
\begin{align*}
\mathcal{L}_S \ast u(x) &= \int^h_{-h} [u(x) - u(x-y)] \nu(y) \; dy \\[.2cm]
&:= \lim_{\epsilon \to 0} \int_\epsilon^h [u(x) - u(x-y)] \nu(y) \; dy +\int^{-\epsilon}_{-h} [u(x) - u(x-y)] \nu(y) \; dy \\[.2cm]
&= \int_0^h [2u(x) - u(x+y) - u(x-y)] \nu(y) \; dy.
\end{align*}
The last equality follows from changing variables, $z=-y$, in the
second integral and using the evenness of $\nu$.

Assuming $u \in C^4$, we can use Taylor's Theorem and expand $u(x-y),u(x+y)$ to obtain
that the above integral is 
\begin{equation*}
-u''(x) \int_0^h y^2 \nu(y) \; dy - \frac{u^{(4)}(\xi_1)}{12}\int_0^h y^4 \nu(y) \; dy,
\end{equation*}
where $\xi_1 \in (x-h,x+h)$ is chosen appropriately to ensure that the equality holds.
We can also rewrite $u''(x)$, using a Taylor expansion, to get its
second order finite difference formula:
\begin{equation*}
-\bigg[\frac{u(x+h) -2u(x) + u(x-h)}{h^2} 
+ \frac{u^{(4)}(\xi_2)}{12}h^2\bigg]\int_0^h y^2 \nu(y) \; dy 
- \frac{u^{(4)}(\xi_1)}{12}\int_0^h y^4 \nu(y) \; dy.
\end{equation*}
Simplifying this result, we have
\begin{equation*}
\mathcal{L}_S \ast u(x) = -\bigg[u(x+h) -2u(x) + u(x-h)\bigg] f_1(h)
-\frac{u^{(4)}(\xi_2)}{12}f_2(h) - \frac{u^{(4)}(\xi_1)}{12} f_3(h)
\end{equation*}
where
\begin{equation*}
f_1(h) = \frac{1}{h^2}\int^h_{0} y^2 \nu(y) \; dy, \quad \quad f_2(h)
=h^2 \int^h_{0} y^2 \nu(y) \; dy, \quad \quad f_3(h) = \int^h_{0} y^4 \nu(y) \; dy.
\end{equation*}
For a specific grid point $x_i$, we can rewrite the above formula as follows:
\begin{align*}
\mathcal{L}_S \ast u(x_i) =& \bigg[u(x_i) - u(x_{i-1})\bigg] f_1(h) 
+ \bigg[u(x_i) - u(x_{i+1})\bigg] f_1(h)-\frac{u^{(4)}(\xi_2)}{12}f_2(h) \\ 
&- \frac{u^{(4)}(\xi_1)}{12} f_3(h).
\end{align*}

\subsubsection{Discretization of the Tail Integral}
Let $T(x)$ be the hat function
\begin{equation*}
T(x) :=
\begin{cases}
1-\frac{|x|}{h} & \text{if } |x| \leq h, \\ 
0 & \text{otherwise}.
\end{cases}
\end{equation*}
Then we can interpolate any function $f(x)$ on all of $\mathbb{R}$ as follows:
\begin{equation*}
Pf(y) := \sum\limits^\infty_{j=-\infty} f(x_j)T(y-x_j).
\end{equation*}
Note that this is just piecewise polynomial interpolation, where we've
chosen the interpolating polynomials to be the linear (Lagrange)
polynomials on their given domain $[x_i,x_{i+1}]$. \\

Letting $f(y) = u(x_i) - u(x_i-y)$ and plugging its interpolation
into the tail integral we get
\begin{align*}
\mathcal{L}_T \ast u(x_i) =& \int_{|y| \geq h} f(y) \nu(y) \; dy\\[.2cm]
\approx & \int_{|y| \geq h} Pf(y) \nu(y) \; dy\\[.2cm]
= & \sum\limits^\infty_{j=-\infty} \bigg[(u(x_i)-u(x_i - x_j)) \int_{|y| \geq h} T(y-x_j)\nu(y) \; dy \bigg].
\end{align*}
Because the hat function $T$ is zero almost everywhere, the
latter integral is actually defined on finite interval and, as we will
show later, it can be computed easily. Finally, because this is a
polynomial interpolation, it can be shown that
\begin{equation*}
\mathcal{L}_T \ast u(x_i)  := \int_{|y| \geq h} Pf(y) \nu(y) \; dy  + O\bigg(h^2 \int^\infty_h |\nu(y)| \; dy\bigg).
\end{equation*}

\subsubsection{Discretization of the Nonlocal Operator} \label{ss:discrete_nonlocal}
Let
\begin{equation} 
\begin{split}
f_1(h) = \frac{1}{h^2} \int^h_0 y^2 \nu(y) \; dy, \quad  \quad
 f_3(h) = \int^h_0 y^4 \nu(y) \; dy, \\[.2cm]
f_2(h) = h^2 \int^h_0 y^2 \nu(y) \; dy, \quad \quad 
 f_4(h) = h^2 \int^\infty_h  | \nu(y)| \; dy,
\label{e:def_fi}
\end{split}
\end{equation}
then using the above results, we can write
\begin{align}  \label{e:order}
\mathcal{L} \ast u(x_i) =& \mathcal{L}_S \ast u(x_i) + \mathcal{L}_T \ast u(x_i) \nonumber \\[.2cm] 
=& \sum\limits^\infty_{j=-\infty}\bigg([u(x_i)-u(x_i - x_j)]w_j\bigg) +\\[.2cm]
&O\bigg(f_2(h)\bigg) + O\bigg(f_3(h)\bigg) + O\bigg(f_4(h)\bigg), \nonumber
\end{align}
where
\begin{equation}\label{e:weights1}
w_j =
\begin{cases}
0 & \text{if } j=0,\\[.2cm]
f_1(h) + \int_{|y| \geq h} T(y-x_j) \nu(y) \; dy & \text{if } j = \pm 1, \\[.2cm]
\int_{|y| \geq h} T(y-x_j) \nu(y) \; dy &
\text{otherwise.}
\end{cases}
\end{equation}
Note that whenever $j=0$, we have that $u(x_i)-u(x_i - x_j)=0$ and
hence we can define $w_0$ arbitrarily.

\begin{Remark}\label{r:order}
From equation \eqref{e:order} we can conclude that the order of the scheme presented in this section is given by
\[ \min \left\{O\bigg(f_2(h)\bigg) + O\bigg(f_3(h)\bigg) + O\bigg(f_4(h)\bigg) \right\}.\]

\end{Remark}

\subsection{Numerical Methods on a Finite Lattice }\label{s:num_method}
Having found a discretization of the nonlocal operator
that is also valid on the whole
real line, we now focus on to its practical application. Namely, while the
scheme approximates the nonlocal equations for any $x\in\Omega$, where $\Omega$ can be a bounded or unbounded subset of $\R$, it
still requires the calculation of an infinite number of weights,
$w_j$. In this section, we discuss how to modify the scheme such that
only a finite number of weights need to be calculated as well as a few
nontrivial integrals.

First, let $M$ be some even number such $L=\frac{M}{2}h$ and let
$L_W=2L=Mh$. Define $x_j = jh$ for $-M \leq j \leq M$. Now, unlike in
the previous section, we want to split the nonlocal operator as
\begin{align*}
\mathcal{L} \ast u(x_i) =& \int^h_{-h} [u(x_i) - u(x_i-y)] \nu(y) \; dy + \int_{h \leq |y| \leq L_W}[u(x_i) - u(x_i-y)] \nu(y) \; dy \\[.2cm]
& + u(x_i) \int_{|y| \geq L_W} \nu(y) \; dy - \int_{|y| \geq L_W}u(x_i -y) \nu(y) \; dy.
\end{align*}
Call these integrals (Ia), (Ib), (II), and (III) respectively. Note
that integral (III) depends on the specific point $x_i$ chosen.

Due to the local nature of the hat functions, we can repeat all of the
arguments of Section~\ref{ss:num_scheme} to write
\begin{align*}
\text{(Ia)} + \text{(Ib)} =& \sum\limits^M_{j=-M}\bigg([u(x_i)-u(x_i - x_j)]w_j\bigg)\\[.2cm]
&+O\bigg(f_2(h)\bigg) + O\bigg(f_3(h)\bigg) + O\bigg(f_4(h)\bigg)
\end{align*}
where
\begin{equation*}
w_j =
\begin{cases}
0 & \text{if } j=0,\\[.2cm]
f_1(h) + \int_{h \leq |y| \leq L_W} T(y-x_j) \nu(y) \; dy & \text{if } j = \pm 1, \\[.2cm]
\int_{h \leq |y| \leq L_W } T(y-x_j) \nu(y) \; dy & \text{if } 1 < |j| \leq M,
\end{cases}
\end{equation*}
with the functions $f_k(h)$ defined in \eqref{e:def_fi}.
Note also that the weights $w_j$ are still even here as well.

Integral (II) doesn't depend of $u(x)$ and can, in principle, be
calculated analytically. Thus, for simplicity we define the 
constant $A$ as
\begin{equation}\label{e:def_A}
A = \int_{|y| \geq L_W} \nu(y) \; dy .
\end{equation}
Lastly, integral (III) can also be calculated analytically depending
on the specific problem under consideration.
This step is described in the following.

\subsection{Dirichlet Problem} \label{ss:scheme_Dirichlet}
We begin here by considering the Dirichlet problem
\begin{equation}\label{e:DP}
\begin{cases}
\mathcal{L} \ast u(x) = f(x), & x \in (-L, L), \\ 
u(x) = g(x), & x \in (-L,L)^c.
\end{cases}
\end{equation}
Note that $L_W$ is the smallest number such that  $u(x_i - y) = g(x_i - y)$ 
for all $|y| \geq L_W$ and all $-\frac{M}{2}+1 \leq i \leq \frac{M}{2}-1$.
Hence, defining $B_i = (III)$ to
highlight it's dependence on $i$, we have
\begin{equation} \label{e:def_Bi}
B_i = \int_{|y| \geq L_W} g(x_i -y) \nu(y) \; dy.
\end{equation}
Since $\nu$ and $g$ are given functions, the integral $B_i$ can be
calculated analytically. Dropping the big oh terms, our numerical
scheme for the Dirichlet problem is given by
\begin{equation}\label{e:disDP}
\sum\limits^M_{j=-M}\bigg([u_i-u_{i -j}]w_j\bigg) + Au_i - B_i = f(x_i),
\end{equation}
for $-\frac{M}{2}+1 \leq i \leq \frac{M}{2}-1$. 
Note that $u_{\pm \frac{M}{2}} = g(x_{\pm \frac{M}{2}})$ 
and so the boundary points, $i=\pm \frac{M}{2}$, do not need to be solved for.

\subsection{Whole Real Line Problem} \label{ss:scheme_whole_real_line}
In this section we consider the problem posed on the whole real line
\begin{equation}\label{e:UP}
\mathcal{L} \ast u(x) = f(x), \quad x \in \mathbb{R}.
\end{equation}

Since $\nu(x)$ and $f(x)$ are given explicitly it may be possible in
some cases to discern the asymptotic decay rate of the solution
$u(x)$.  For example, if $u(x)$ decays exponentially to zero then it
should be possible to ignore the integral (III) by choosing $L$
sufficiently large. This approximation is essentially the Dirichlet
problem previously considered where we take $g(x) = 0$ (on a
sufficiently large domain).

On the other-hand, if the solution $u(x)$ decays too slowly 
(e.g. algebraically) to
zero then ignoring the integral (III) could lead to large errors (or
require extremely large domain sizes). To get around this issue, 
using corollaries \ref{c:invertible_con}-\ref{c:uniqueness_d},
we know the approximate asymptotic decay rate of $u(x)$ i.e. $u(x) \sim
g(x)$. Then to get a good approximation of the integral (III), we may
assume
\begin{equation}\label{e:decay_g}
u(x) \approx
\begin{cases}
u(L) \frac{g(x)}{g(L)}, \quad x \geq L, \\ 
u(-L) \frac{g(x)}{g(-L)},
\quad x \leq -L.
\end{cases}
\end{equation}
Essentially this just assumes the decay rate is a good approximation
of the solution outside the interval $[-L,L]$. 

The advantage of this formulation, compared to the Dirichlet problem, 
is that we
only assume the decay of the solution to be known. It allows us to use smaller 
values of $L$ to get a given order of accuracy compared to the Dirichlet problem 
which assumes that the solution $u(x$) vanished for large values of $x$. 
Thus, the Dirichlet method is 
either adequate for problem with exponentially decaying solution or 
 requires large  value of $L$ when the solution decays algebraically.

Thus the integral (III) can be approximated as
\begin{align*}
\text{(III)} &= \int_{|y| \geq L_W}u(x_i -y) \nu(y) \; dy \\[.2cm]
&\approx \frac{u(L)}{g(L)} \int_{|y| \geq L_W}g(x_i -y) \nu(y) \; dy \\[.2cm]
&\approx  \frac{u(-L)}{g(-L)} \int_{-\infty}^{-L_W}g(x_i -y) \nu(y) \; dy + \frac{u(L)}{g(L)} \int_{L_W}^\infty g(x_i -y) \nu(y) \; dy \\[.2cm]
&=  u(-L) B^1_i + u(L) B^2_i \\[.2cm]
&=  u_{-\frac{M}{2}} \; B^1_i + u_{\frac{M}{2}} \; B^2_i,
\end{align*}
where
\begin{equation} \label{e:def_B1i_B2i}
B^1_i = \int_{-\infty}^{-L_W} \frac{g(x_i -y)}{g(-L)} \nu(y) \; dy, \quad  
B^2_i =\int_{L_W}^\infty \frac{g(x_i -y)}{g(L)} \nu(y) \; dy.
\end{equation}
We now see that the integrals $B^1_i$ and $B^2_i$ can be calculated
analytically. This is slightly different from the Dirichlet problem we
considered before as $u(-L)$ and $u(L)$ are now unknowns and
must be solved for in the numerical scheme itself.  With that said, we
can write the complete scheme as
\begin{equation*}
\sum\limits^M_{j=-M}\bigg([u_i-u_{i - j}]w_j\bigg) + Au_i - u_{-\frac{M}{2}} \; B^1_i - u_{\frac{M}{2}} \; B^2_i = f(x_i)
\end{equation*}
for all $-\frac{M}{2}\leq i \leq \frac{M}{2}$ and it's understood that whenever 
$|i-j| > \frac{M}{2}$ we replace $u_{i-j}$ with the approximation (\ref{e:decay_g}).
Notice that the range of allowed $i$ values has increased to account for the fact 
that $u_{-\frac{M}{2}}$ and $u_{\frac{M}{2}}$ are now unknowns.

\subsection{Neumann Problem} \label{ss:scheme_Neumann}
Here we consider the Neumann problem
\begin{equation*}
\begin{cases}
\mathcal{L} \ast u(x) = f(x), \quad &x \in (-\tilde{L},\tilde{L}), \\ 
\mathcal{L} \ast u(x) = f_c(x), \quad &x \in (-\tilde{L},\tilde{L})^c,
\end{cases}
\end{equation*}
where the solution is assumed to decay algebraically, meaning that the equation \eqref{e:decay_g} is satisfied for a given $g$ and $L>\tilde{L}>0$.
Recall that this is equivalent to solving the whole real line problem
\begin{equation*}
\mathcal{L} \ast u(x) = \bar{f}(x), \quad x \in \mathbb{R} \\
\end{equation*}
where
\begin{equation*}
\bar{f}(x) =
\begin{cases}
f(x), \quad &x \in (-\tilde{L},\tilde{L}), \\
 f_c(x), \quad &x \in (-\tilde{L},\tilde{L})^c.
\end{cases}
\end{equation*}
Hence, since we've already developed a numerical method which solves
the problem on the whole real line, we can apply it verbatim to also
solve Neumann problems.

\section{Proofs of Convergence}\label{s:cvg_proof}

In this section we established the convergence of the numerical schemes introduced in the previous section, meaning the schemes approximating the solution of problems of the type \eqref{e:Dirichlet}, \eqref{e:Unbounded} and \eqref{e:Neumann}.

\subsection{Dirichlet}
Consider again the Dirichlet scheme
\begin{equation*}
\sum\limits^M_{j=-M}\bigg([u_i-u_{i -j}]w_j\bigg) + Au_i - B_i = f(x_i)
\end{equation*}
for $-\frac{M}{2}+1 \leq i \leq \frac{M}{2}-1$. 
In this section we will write this system in matrix form and then derive bounds on 
the eigenvalues of the corresponding matrix. Together with the local truncation error derived earlier, this will show the scheme converges.

In addition to assumptions (\ref{e:ass1}) and (\ref{e:ass2}), we will also assume in this section that $\nu(y)$ is an arbitrary $L^1(\R)$ function, taking positive or negative values, such that for all sufficiently large values of $L$ we have that the tails are strictly positive:
\begin{equation}\label{e:ass3}
\int_{|y| >2 L} \nu(y) \; dy > 0.
\end{equation}
This hypothesis will be sufficient to show that the scheme is stable.

\subsubsection{Stability}
To write the scheme in matrix form, we focus first on the summation and, for ease of presentation, we let $u_k:=u(x_k)$. We then have
\begin{align*}
\sum\limits^M_{j=-M}\bigg([u_i-u_{i-j}]w_j\bigg) &= u_i\sum\limits^M_{j=-M}w_j -\sum\limits^M_{j=-M} u_{i-j}w_j \\[.2cm]
&= u_i\sum\limits^M_{j=-M}w_j -\sum\limits_{|i-j| \leq \frac{M}{2}-1} u_{i-j}w_j -\sum\limits_{|i-j| > \frac{M}{2}-1} u_{i-j}w_j .
\end{align*}
Note that in the last sum of the second line we have $u_{i-j}=g_{i-j}$. 
Keeping in mind that $i$ is a fixed constant here, it reads
\begin{align*}
\sum\limits^M_{j=-M}\bigg([u_i-u_{i-j}]w_j\bigg)  &= u_i\sum\limits^M_{j=-M}w_j 
-\sum\limits_{j= i- \frac{M}{2}+1}^{j= i+ \frac{M}{2}-1} u_{i-j}w_j 
-\sum\limits^{i- \frac{M}{2}}_{j=-M} g_{i-j}w_j 
-\sum\limits_{j=i+ \frac{M}{2}}^{M} g_{i-j}w_j .
\end{align*}
Renaming the above quantities as follows
\begin{equation*}
c^1=\sum\limits^M_{j=-M}w_j  \quad, \quad c^2_i=\sum\limits^{i- \frac{M}{2}}_{j=-M} g_{i-j}w_j \quad , \quad c^3_i=\sum\limits_{j=i+ \frac{M}{2}}^{M} g_{i-j}w_j,
\end{equation*} 
we get
\begin{align*}
\sum\limits^M_{j=-M}\bigg([u_i-u_{i-j}]w_j\bigg) &= u_i c^1 - c^2_i - c^3_i
-\sum\limits_{j= i- \frac{M}{2}+1}^{j= i+ \frac{M}{2}-1} u_{i-j}w_j \\[.2cm]
&= u_i c^1- c^2_i - c^3_i - \sum\limits_{k=-\frac{M}{2}+1}^{\frac{M}{2}-1} u_{k}w_{i-k}.
\end{align*}
The last equality is obtained by the change of variables $k=i-j$. Letting $\hat{u}$ denote
 the vector
\begin{equation*}
\begin{bmatrix}
u_{-\frac{M}{2}+1} \\
\vdots \\
u_{\frac{M}{2}-1}
\end{bmatrix},
\end{equation*}
we can rewrite the above equation in matrix form as
\begin{equation*}
c^1 \hat{u} - c^2  - c^3 - \hat{w}\hat{u},
\end{equation*}
where $c^2, c^3$ are  vectorized versions of $c^2_i, c^3_i$ and $\hat{w}$ is the
 matrix given by
\begin{equation*}
\begin{bmatrix}
w_0 & w_{-1} & \dots & w_{-M+2} \\
w_1 & w_0 & \dots & w_{-M+3}\\
\vdots & \vdots & \ddots\ & \vdots \\
w_{M-2} & w_{M-3} & \dots & w_0
\end{bmatrix}.
\end{equation*}
Note that since $w_{-j}=w_j$ the matrix is symmetric and in fact Toeplitz.
We can then write the numerical scheme for the Dirichlet problem as
\begin{equation*}
c^1 \hat{u} - c^2  - c^3 - \hat{w}\hat{u} + A\hat{u} - B = \hat{f}
\end{equation*}
or, equivalently,
\begin{equation*}
(c^1 I - \hat{w} + A I) \hat{u} = \hat{f} + c^2 +c^3 + B.
\end{equation*}
Letting $N = c^1 I - \hat{w} + A I$, we note that $N$ is symmetric 
and Toeplitz as well.

Following the work of \cite{FERREIRA19941227}, we now derive bounds on the eigenvalues of $N$. Define the symmetric, Toeplitz matrix $S$ as the matrix whose first row is given by $\begin{bmatrix} 0 & -w_{M-2} &  -w_{M-3} &\dots &-w_{2} & -w_{1} \end{bmatrix}$ or, more explicitly,
\begin{equation*}
S = \begin{bmatrix}
0 & -w_{M-2} & \dots & -w_{1} \\
-w_{M-2} & 0 & \dots & -w_{2}\\
\vdots & \vdots & \ddots\ & \vdots \\
-w_{1} & -w_{2} & \dots & 0
\end{bmatrix}.
\end{equation*}
Now define the block matrix $C$ as
\begin{equation*}
C = \begin{bmatrix}
N & S \\
S & N 
\end{bmatrix},
\end{equation*}
and note that not only is $C$ a symmetric, Toepltiz matrix but it is also circulant. Since it's circulant, the eigenvalues are given explicitly by 
\begin{align*}
\lambda_j &= \sum\limits_{k = 1}^{2M-2} C_{1k}z_j^{k-1} \\
&=(c^1 + A) -w_1 z_j -w_2 z_j^2- \dots -w_{M-2} z_j^{M-2} \\
& \quad -w_{M-2} z_j^{M} -w_{M-3} z_j^{M+1} - \dots - w_{1} z_j^{2M-3},
\end{align*}
where $z_j = \text{exp}(i\frac{2\pi j}{2M-2})$ and $0 \leq j \leq 2M-3$. 

Let $\mu_1$ and $\mu_{M-1}$ denote the smallest and largest eigenvalues 
of N, respectively. Then using the main result of \cite{FERREIRA19941227} we can 
bound the eigenvalues of $N$ by the eigenvalues of $C$. Specifically, we have
\begin{align*}
(\min\limits_{j \; \text{even}} \lambda_j) + (\min\limits_{j \; \text{odd}} \lambda_j) &\leq 2\mu_1, \\
(\max\limits_{j \; \text{even}} \lambda_j) + (\max\limits_{j \; \text{odd}} \lambda_j) &\geq 2\mu_{M-1}.
\end{align*}
We then see that if we can derive a lower bound for all the $\lambda_j$ then this will also be a lower bound for $\mu_1$. Noting that $z_j^{2M-2} = 1$, we can write
\begin{align*}
\lambda_j &= (c^1 + A) -w_1 z_j -w_2 z_j^2- \dots -w_{M-2} z_j^{M-2} \\[.1cm]
& \quad +z_j^{2M-2} (-w_{M-2} z_j^{-M+2} -w_{M-3} z_j^{-M+3} - \dots - w_{1} z_j^{-1} ) \\[.1cm]
&= c^1+ A - \sum\limits_{k = 2-M}^{M-2} w_k z_j^{k} \\[.1cm]
&= \sum\limits_{k =-M}^{M} w_k + A - \sum\limits_{k = 2-M}^{M-2} w_k z_j^{k}.
\end{align*} 
If we now use the fact that $w_j = \nu(hj) h + O(h^2)$ and $h= \frac{2L}{M}$ then we can rewrite the above as
\begin{align*}
\lambda_j &=  \int^{2L}_{-2L} \nu(x) \; dx + A - \int^{2L}_{-2L} \nu(x) e^{i\frac{j \pi}{2L}x} \; dx + O(h) \\[.2cm]
&=  1 - \int^{2L}_{-2L} \nu(x) e^{i\frac{j \pi}{2L}x} \; dx + O(h) \\[.2cm]
&= 1-  \int^{2L}_{-2L} \nu(x) \cos({\frac{j \pi}{2L}x}) \; dx + O(h) \\[.2cm]
& \to   1-  \int^{2L}_{-2L} \nu(x) \cos({\frac{j \pi}{2L}x}) \; dx  \quad \text{as}\quad  h \to 0. 
\end{align*}

Define $\Lambda_j=1-  \int^{2L}_{-2L} \nu(x) \cos({\frac{j \pi}{2L}x}) \; dx$ and note that since the $\lambda_j$ get arbitrarily close to the $\Lambda_j$, it's sufficient to show that $\Lambda_j$ is bounded away from zero for all $j$. In the special case that $j=0$, we see that $\Lambda_0 = A > 0$. 

For $j \geq 1$, denote the symbol of $\mathcal{L}$ as $\hat{\mathcal{L}}$ so that we can write 
\begin{align*}
\Lambda_j  &= \hat{\mathcal{L}}(\frac{j \pi}{2L}) + \int_{|x| \geq 2L} \nu(x) \cos({\frac{j \pi}{2L}x}) \; dx \\[.2cm]
&\geq  \hat{\mathcal{L}}(\frac{j \pi}{2L}) - \int_{|x| \geq 2L} \nu(x) \; dx \\[.2cm]
&\geq  \hat{\mathcal{L}}(\frac{j \pi}{2L}) - 2 e^{-\eta L} \\[.2cm]
&=  M(\frac{j \pi}{2L})\frac{(\frac{j \pi}{2L})^2}{1+(\frac{j \pi}{2L})^2} - 2 e^{-\eta L} \\[.2cm]
& \geq c_M \frac{(j \pi)^2}{(2L)^2+(j \pi)^2} - 2 e^{-\eta L} \\[.2cm]
&> 0, 
\end{align*}
where in the second line we've used that the tails are positive, in the third line we've used that $\nu(x)$ is exponentially localized for large values of $L$, in the fourth line we use the lemma \ref{l:decompositionconv},
in the fifth line that $M$ is bounded below by the positive constant $c_M$, and in the sixth line that it's always possible to choose $L$ large enough such that $c_M \frac{(j \pi)^2}{(2L)^2+(j \pi)^2} - 2 e^{-\eta L}$ is positive for all $j \geq 1$. To finish, note that $c_M \frac{(j \pi)^2}{(2L)^2+(j \pi)^2} - 2 e^{-\eta L}$ is an increasing function of $j$ so that $\Lambda_j$ is bounded below by $ c_M \frac{(\pi)^2}{(2L)^2+(\pi)^2} - 2 e^{-\eta L}$ for all $j \geq 1$. Defining $\Lambda_{\min} = \min \{A, \; c_M \frac{(\pi)^2}{(2L)^2+(\pi)^2} - 2 e^{-\eta L}\}$, this immediately gives that $\mu_1 \geq  \Lambda_{\min} > 0$ and implies the scheme is stable in the (grid) 2-norm.

\subsubsection{Consistency}
To get a precise bound on the local truncation error, note that since $\nu \in L^1(\R)$, we can apply Holder's Inequality to the integrals in Eqs.~(\ref{e:def_fi}). Doing so yields
\begin{equation*}
|f_2(h)| \leq h^4 \| \nu \|_{L^1(\R)}  \quad , \quad |f_3(h)| \leq h^4 \| \nu \|_{L^1(\R)} \quad, \quad  |f_4(h)| \leq h^2 \| \nu \|_{L^1(\R)},
\end{equation*}
which shows that the local truncation error is at least $O(h^2)$.

\subsubsection{Convergence}
Changing notation slightly, let $U(x)$ be the solution of the problem (DP), given by Eq.~(\ref{e:DP}), $\hat{u}$ be the solution of the corresponding discrete scheme above, and define $E^h_i = U(x_i) -\hat{u}_i$ for all $-\frac{M}{2}+1 \leq i \leq \frac{M}{2}-1$. Denoting the local truncation error by LTE, we have
\begin{align*}
NE^h &= NU - N\hat{u} \\
&= NU + c^2 + c^3 + B - N\hat{u} - c^2 - c^3 - B \\
&= \hat{f} + LTE - \hat{f}\\
&= LTE.
\end{align*}
Inverting the matrix $N$ and applying the properties of grid norms gives
\begin{align*}
\| E^h \|_2 &\leq  \| N^{-1} \|_2 \| LTE \|_2 \\
&\leq  \frac{1}{\Lambda_{\min}} \| LTE \|_2 \\
 &\leq \frac{\sqrt{L}}{\Lambda_{\min}} \| LTE \|_\infty
\end{align*}
Since $LTE = O(h^2)$ and $\Lambda_{\min}$ doesn't depend on $h$, we can take the limit as $h \to 0$ on both sides to conclude that $E^h \to 0$, so that the scheme converges.

\subsection{Whole Real Line Problem}
Consider again the scheme for the whole real line
\begin{equation*}
\sum\limits^M_{j=-M}\bigg([u_i-u_{i - j}]w_j\bigg) + Au_i - u_{-\frac{M}{2}} \; B^1_i - u_{\frac{M}{2}} \; B^2_i = f(x_i),
\end{equation*}
for $-\frac{M}{2}\leq i \leq \frac{M}{2}$. 
As the resulting matrix equation isn't symmetric, we will proceed in a different way than the previous subsection to show that the scheme converges. To do this, in addition to Eqs. (\ref{e:ass1}) and (\ref{e:ass2}), we will assume in this section that $\nu(y)$ is a nonnegative $L^1(\R)$ function such that for all sufficiently large values of $L$ we have that the tails are strictly positive:
\begin{equation}\label{e:ass4}
\int_{|y| > 2L} \nu(y) \; dy > 0.
\end{equation}
This contrasts with the Dirichlet problem in that we do not allow the kernel to take possibly  negative values.

Because of Corollary \ref{c:invertible_con}, we will assume that $f$ has been chosen such that the solution $u(x)$ of the (UP) problem satisfies  $|u(x)| \leq \frac{C}{|x|^q}$ for all sufficiently large $x$ and some constants $C,q > 0$. We will then define the decay function as $g(x):= \frac{1}{|x|^q}$.
\subsubsection{Stability}
As before, we first focus on the matrix form of the discrete convolution term. 
We have
\begin{align*}
\sum\limits^M_{j=-M}\bigg([u_i-u_{i-j}]w_j\bigg) &= u_i\sum\limits^M_{j=-M}w_j -\sum\limits^M_{j=-M} u_{i-j}w_j \\[.2cm]
&= u_i\sum\limits^M_{j=-M}w_j -\sum\limits_{|i-j| \leq \frac{M}{2}} u_{i-j}w_j -\sum\limits_{|i-j| > \frac{M}{2}} u_{i-j}w_j \\[.2cm]
&= u_i\sum\limits^M_{j=-M}w_j 
-\sum\limits_{j= i- \frac{M}{2}}^{i+ \frac{M}{2}} u_{i-j}w_j 
-\sum\limits^{i- \frac{M}{2}-1}_{j=-M} u_{i-j}w_j 
-\sum\limits_{j=i+ \frac{M}{2}+1}^{M} u_{i-j}w_j \\[.2cm]
&= u_i\sum\limits^M_{j=-M}w_j 
-\sum\limits_{j= i- \frac{M}{2}}^{i+ \frac{M}{2}} u_{i-j}w_j 
-u_{-\frac{M}{2}}|L|^q\sum\limits^{i- \frac{M}{2}-1}_{j=-M} g_{i-j}w_j 
\\
&\quad -u_{\frac{M}{2}}|L|^q\sum\limits_{j=i+ \frac{M}{2}+1}^{M} g_{i-j}w_j .
\end{align*}
Renaming the above quantities as follows
\begin{equation*}
c^1=\sum\limits^M_{j=-M}w_j  \quad, \quad c^2_i=|L|^q\sum\limits^{i- \frac{M}{2}-1}_{j=-M} g_{i-j}w_j \quad , \quad c^3_i=|L|^q\sum\limits_{j=i+ \frac{M}{2}+1}^{M} g_{i-j}w_j,
\end{equation*} 
we get
\begin{align*}
\sum\limits^M_{j=-M}\bigg([u_i-u_{i-j}]w_j\bigg) &= u_i c^1 - u_{-\frac{M}{2}}c^2_i - u_{\frac{M}{2}}c^3_i
-\sum\limits_{j= i- \frac{M}{2}}^{i+ \frac{M}{2}} u_{i-j}w_j \\[.2cm]
&= u_i c^1-u_{-\frac{M}{2}}c^2_i - u_{\frac{M}{2}}c^3_i - \sum\limits_{k=-\frac{M}{2}}^{\frac{M}{2}} u_{k}w_{i-k}.
\end{align*}
The last equality is obtained by the change of variables $k=i-j$. 
Letting $\hat{u}$ denote the vector
\begin{equation*}
\begin{bmatrix}
u_{-\frac{M}{2}} \\
\vdots \\
u_{\frac{M}{2}}
\end{bmatrix},
\end{equation*}
we can rewrite the above equation in matrix form as
\begin{align*}
&c^1 \hat{u} - u_{-\frac{M}{2}}c^2  - u_{\frac{M}{2}}c^3 - \hat{w}\hat{u} \\[.2cm]
&= \big(c^1 I- [c^2 \; 0 \; \dots \;  0 \; c^3] - \hat{w} \big)\hat{u},
\end{align*}
where $\hat{w}$ is the matrix given by
\begin{equation*}
\begin{bmatrix}
w_0 & w_{-1} & \dots & w_{-M} \\
w_1 & w_0 & \dots & w_{-M}\\
\vdots & \vdots & \ddots\ & \vdots \\
w_{M} & w_{M-1} & \dots & w_0
\end{bmatrix},
\end{equation*}
$c^2, c^3$ are just vectorized versions of $c^2_i, c^3_i$, and 
$[c^2 \; 0 \; \dots \;  0 \; c^3]$ 
has enough zero vectors to make the multiplication well-defined.  
We can then write the numerical scheme for the whole real line problem as
\begin{equation*}
 \big(c^1 I + AI- [(c^2 + B^1)  \; \; 0 \; \dots \;  0 \; \; (c^3 + B^2)] - \hat{w} \big)\hat{u} = \hat{f}
\end{equation*}
which we note is neither symmetric nor Toeplitz like the Dirichlet case.

Defining $N := c^1 I + AI- [(c^2 + B^1)  \; \; 0 \; \dots \;  0 \; \; (c^3 + B^2)] - \hat{w}$ and $Z := I - N$, we now want to show that the $L^\infty$ norm of $Z$ is strictly less than one for all values of $h$. We will then able to bound the $L^\infty$ norm of $N^{-1}$ in terms of the norm of $Z$ via the corresponding Neumann series 
\[N^{-1} = \sum_{n=0}^{\infty} Z^n.\]
Further, since 
\begin{align*}
\|Z\|_\infty = \max\limits_{0 \leq k \leq M} \sum_{j=0}^M |Z_{kj}|
\end{align*}
it's enough to show that the $L^1$ norm of each row is strictly less than one.

For simplicity, we first derive three inequalities which will be needed. To begin, note that
\begin{align*}
A + c^1 &= \int_{|y| \geq L_W} \nu(y) \; dy + \sum_{j=-M}^M w_j \\
&= \int_{|y| \geq L_W} \nu(y) \; dy + \sum_{j=-M}^M \int_{h \leq |y| \leq L_W} T(y -x_j) \nu(y) \; dy + 2f_1(h)\\
&= \int_{|y| \geq L_W} \nu(y) \; dy +  \int_{h \leq |y| \leq L_W} \bigg(\sum_{j=-M}^M T(y -x_j)\bigg) \nu(y) \; dy +2f_1(h) \\
&= \int_{|y| \geq L_W} \nu(y) \; dy +  \int_{h \leq |y| \leq L_W} \nu(y) \; dy + 2f_1(h) \\
&\leq 1,
\end{align*}
where the last line is given by using that $f_1(h) \leq \int_0^h \nu(y) \; dy$. Also, since $|L|^q g(y) \leq 1$ for all $|y| \geq L$ it follows that
\begin{align*}
c^2_i \leq \sum^{i- \frac{M}{2}-1}_{j=-M} w_j \quad , \quad c^3_i \leq \sum\limits_{j=i+ \frac{M}{2}+1}^{M} w_j
\end{align*}
and, by using that the $w_j$ are even, that 
\[\sum_{j = -k}^{M-k} w_j + c^2_{-\frac{M}{2}+k} + c^3_{-\frac{M}{2}+k} \leq c^1.\]
Finally, define $P_k(L) := \int_{|y| \geq L_W} ( 1- |L|^q g(x_{-\frac{M}{2}+k}-y))\nu(y) \; dy$ and note that
\begin{align*}
P_k(L) &= \int_{|y| \geq L_W} ( 1- |L|^q g(x_{-\frac{M}{2}+k}-y))\nu(y) \; dy \\[.2cm]
&= \int_{|y| \geq L_W} ( 1-\frac{ |L|^q }{|x_{-\frac{M}{2}+k}-y|^q}) \nu(y) \; dy \\[.2cm]
&\geq \int_{|y| \geq 2L_W} ( 1-\frac{ |L|^q }{|x_{-\frac{M}{2}+k}-y|^q}) \nu(y) \; dy \\[.2cm]
&\geq \int_{|y| \geq 2L_W} ( 1-\frac{ |L|^q }{|3L|^q}) \nu(y) \; dy \\[.2cm]
&= (1 - \frac{1}{3^q}) \int_{|y| \geq 2L_W} \nu(y) \; dy.
\end{align*}
Defining $\Lambda_{\min}(L) := (1 - \frac{1}{3^q}) \int_{|y| \geq 2L_W} \nu(y) \; dy$, we note that $\Lambda_{\min}$ is strictly positive and independent of $h$. For reference, we list the three inequalities here as
\begin{gather}
c^1 + A \leq 1 \label{in: 1} \\[.1cm]
\sum_{j = -k}^{M-k} w_j + c^2_{-\frac{M}{2}+k} + c^3_{-\frac{M}{2}+k} \leq c^1 \label{in: 2}\\[.1cm]
P_k  \geq \Lambda_{\min} \label{in: 3}.
\end{gather}

Returning to the the $Z$ matrix, we have for the first row that
\begin{align*}
0 \leq \sum_{j=0}^M |Z_{0j}| & = | 1- c^1 -A + c^2_{-\frac{M}{2}} + B^1_{-\frac{M}{2}} | + \sum_{j = 1}^{M-1} |w_j| + |  c^3_{-\frac{M}{2}} + B^2_{-\frac{M}{2}} + w_{M}| \\
& \leq | 1- c^1 -A| + |c^2_{-\frac{M}{2}} + B^1_{-\frac{M}{2}} | + \sum_{j = 1}^{M-1} |w_j| + |  c^3_{-\frac{M}{2}} + B^2_{-\frac{M}{2}} + w_{M}| \\
& =  1- c^1 -A + c^2_{-\frac{M}{2}} + B^1_{-\frac{M}{2}}  + \sum_{j = 0}^{M} w_j + c^3_{-\frac{M}{2}} + B^2_{-\frac{M}{2}} \\
&\leq 1 - A + B^1_{-\frac{M}{2}} + B^2_{-\frac{M}{2}} \\
& = 1 - \int_{|y| \geq L_W} ( 1- |L|^q g(x_{-\frac{M}{2}}-y))\nu(y) \; dy \\
&= 1- P_0 \\
&\leq 1- \Lambda_{\min},
\end{align*}
where in the second line we used triangle inequality, in third we used inequality~(\ref{in: 1}), in the fourth inequality~(\ref{in: 2}), and in the last line inequality~(\ref{in: 3}). Similarly, for the last row of the $Z$ matrix we have
\begin{align*}
0 \leq \sum_{j=0}^M |Z_{Mj}| & = |c^2_{\frac{M}{2}} + B^1_{\frac{M}{2}} + w_M| + \sum_{j = 1}^{M-1} |w_j| + | 1- c^1 -A +  c^3_{\frac{M}{2}} + B^2_{\frac{M}{2}}| \\
& \leq c^2_{\frac{M}{2}} + B^1_{\frac{M}{2}}  + \sum_{j = 0}^{M} w_j +  1- c^1 -A +  c^3_{\frac{M}{2}} + B^2_{\frac{M}{2}} \\
&\leq 1 - A + B^1_{\frac{M}{2}} + B^2_{\frac{M}{2}} \\
& = 1 - \int_{|y| \geq L_W} ( 1- |L|^q g(x_{\frac{M}{2}}-y))\nu(y) \; dy \\
&= 1- P_M \\
&\leq 1- \Lambda_{\min},
\end{align*}
where the inequalities (\ref{in: 1}), (\ref{in: 2}), and (\ref{in: 3}) were used in the same way as before. By applying the same argument, we have for any row in between the first and last
\begin{align*}
0 &\leq \sum_{j=0}^M |Z_{kj}| \\
& =  |c^2_{-\frac{M}{2}+k} + B^1_{-\frac{M}{2}+k} + w_{-k}| + \sum_{j = -k}^{M-k-1} |w_j| + | 1- c^1 -A| +  |c^3_{-\frac{M}{2}+k} + B^2_{-\frac{M}{2}+k}+w_{M-k}| \\
& \leq  c^2_{-\frac{M}{2}+k} + B^1_{-\frac{M}{2}+k} + \sum_{j = -k}^{M-k} w_j + 1- c^1 -A +  c^3_{-\frac{M}{2}+k} + B^2_{-\frac{M}{2}+k} \\
&\leq 1 - A + B^1_{-\frac{M}{2}+k} + B^2_{-\frac{M}{2}+k} \\
&= 1- P_k \\
&\leq 1- \Lambda_{\min}.
\end{align*}

Since the same bound applies to each of the sums, we must have that
\[ \|Z\|_\infty \leq 1 - \Lambda_{\min} < 1. \]
We then have stability of the $N$ matrix since
\begin{align*}
\|N^{-1}\|_\infty &= \| (I - Z)^{-1}\|_\infty \\
&= \| \sum_{n=0}^\infty Z^n \|_\infty \\
&\leq \sum_{n=0}^\infty \| Z \|_\infty ^n \\
&\leq \sum_{n=0}^\infty (1- \Lambda_{\min}) ^n \\
&= \frac{1}{\Lambda_{\min}}.
\end{align*}

\subsubsection{Consistency}

The above argument establishes the numerical method is stable. We now need to bound the local truncation error in order to get consistency. To this end, we'd like to show that 
\begin{align}\label{e:UPCons}
&\int_{\R} (u(x_i) - u(x_i-y)) \nu(y) \; dy \nonumber \\ 
=&\sum\limits^M_{j=-M}\bigg([u_i-\tilde{u}_{i - j}]w_j\bigg) + Au_i - u_{-\frac{M}{2}} \; B^1_i - u_{\frac{M}{2}} \; B^2_i  \\
& + O(h^2,|L|^{-q}),\nonumber
\end{align}
where we've placed a tilde on the second term in the sum to remind the reader that if $|i-j| > \frac{M}{2}$ then $\tilde{u}_{i-j} = u(\pm L)|L|^q g_{i-j}$. 

We'll do this in four steps. First, decompose the integral on the LHS into four pieces, corresponding to each of the first four terms on the RHS respectively:
\begin{align*}
\int_{-2L}^{2L} (u(x_i) - u(x_i-y)) \nu(y) \; dy  + u(x_i) \int_{|y| \geq 2L} \nu(y) \; dy \\[.2cm]
- \int_{-\infty}^{-2L}u(x_i-y) \nu(y) \; dy - \int_{2L}^{\infty}u(x_i-y) \nu(y) \; dy.
\end{align*}
Second, note that if $x \in [-L,L]$ and $y \in (-2L,2L)^c$ then
\begin{align*}
|u(x -y) - \frac{u(\pm L)}{g(\pm L)} g(x - y) | & \leq  |u(x - y)| + |u(\pm L)\frac{g(x - y)}{g(\pm L)}| \\[.1cm]
& \leq  \frac{C}{|x - y|^q} + \frac{C}{|L|^q} \frac{|L|^q}{|x - y|^q}\\[.2cm]
& \leq  \frac{2C}{|x - y|^q}\\[.2cm]
& \leq  \frac{2C}{|L|^q}.
\end{align*}
Next, by using the previous inequality we get
\begin{align*}
& | \int_{y \geq 2L} u(x_i-y) \nu(y) \; dy  - u_{\frac{M}{2}} \; B^2_i| \\[.2cm]
= \;& | \int_{y \geq 2L} u(x_i-y) \nu(y) \; dy  - \int_{|y| \geq 2L} \frac{u(L)}{g(L)} g(x_i - y) \nu(y) \; dy | \\[.2cm]
\leq \; &   \frac{2C}{|L|^q} \int_{y \geq 2L} \nu(y) \; dy,
\end{align*}
and something similar for the corresponding pair. If we now note that $u(x_i) \int_{|y| \geq 2L} \nu(y) \; dy = A u_i$, then we have the preliminary bound
\begin{align*}
 &u(x_i) \int_{|y| \geq 2L} \nu(y) \; dy - \int_{-\infty}^{-2L}u(x_i-y) \nu(y) \; dy - \int_{2L}^{\infty}u(x_i-y) \nu(y) \; dy\\[.2cm]
 = \; & Au_i - u_{-\frac{M}{2}} \; B^1_i - u_{\frac{M}{2}} \; B^2_i + O(|L|^{-q}).
\end{align*}

Finally, consider the last remaining integral $\int_{-2L}^{2L} (u(x_i) - u(x_i-y)) \nu(y) \; dy$. We have
 \begin{align*}
& |\int_{-2L}^{2L} (u(x_i) - u(x_i-y)) \nu(y) \; dy  - \sum\limits^M_{j=-M}\bigg([u_i-\tilde{u}_{i - j}]w_j\bigg) | \\[.2cm]
\leq \; &  |\int_{-2L}^{2L} (u(x_i) - u(x_i-y)) \nu(y) \; dy  - \sum\limits^M_{j=-M}\bigg([u_i-u_{i - j}]w_j\bigg)|  \\[.2cm]
&+  | \sum\limits^M_{j=-M}\bigg([u_i-u_{i - j}]w_j\bigg) -  \sum\limits^M_{j=-M}\bigg([u_i-\tilde{u}_{i - j}]w_j\bigg) | \\[.2cm]
= \; &  |\int_{-2L}^{2L} (u(x_i) - u(x_i-y)) \nu(y) \; dy  - \sum\limits^M_{j=-M}\bigg([u_i-u_{i - j}]w_j\bigg)|  \\[.2cm]
&+  | \sum\limits^M_{j=-M}\bigg([u_{i - j} - \tilde{u}_{i - j}]w_j\bigg)  |.
\end{align*}
With regards to the last line, note that for fixed $L$ we've already shown in the Dirichlet problem section that the first term is $O(h^2)$; as long as we take $h \to 0$ before changing $L$, this first term will be identically zero. By using our inequality above, the second term can be bounded as
\begin{align*}
 | \sum\limits^M_{j=-M}\bigg([u_{i - j} - \tilde{u}_{i - j}]w_j\bigg)  |
\leq  \frac{2C}{|L|^q} \int_{-2L}^{2L} \nu(y) \; dy,
\end{align*}
which we note is independent of $h$. 

Collecting results, this shows Eq.~(\ref{e:UPCons}) holds true and the scheme is consistent for all sufficiently large $L$. Further, for all sufficiently large $L$, as $h \to 0$ we have that the pointwise error term is bounded above by
\begin{align*}
&\frac{2C}{|L|^q} \int_{|y| \geq 2L} \nu(y) \; dy + \frac{2C}{|L|^q} \int_{-2L}^{2L} \nu(y) \; dy\\[.2cm]
= \; & \frac{2C}{|L|^q} .
\end{align*}

\subsubsection{Convergence}
Let $U(x)$ be the solution of the problem (UP), given by Eq.~(\ref{e:UP}), $\hat{u}$ be the solution of the corresponding discrete scheme above, and define $E^h_i = U(x_i) -\hat{u}_i$ for all $-\frac{M}{2} \leq i \leq \frac{M}{2}$. Denoting the local truncation error by LTE, we have
\begin{align*}
NE^h &= NU - N\hat{u} \\
&= \hat{f} + LTE - \hat{f}\\
&= LTE.
\end{align*}
Inverting the matrix $N$ and applying the properties of grid norms gives
\begin{align*}
\| E^h \|_\infty &\leq  \| N^{-1} \|_\infty \| LTE \|_\infty \\
&\leq  \frac{1}{\Lambda_{\min}} \| LTE \|_\infty.
\end{align*}
Since $LTE = O(h^2,|L|^{-q})$ and $\Lambda_{\min}$ doesn't depend on $h$, we can take the limit as $h \to 0$ on both sides to conclude that
\begin{align*}
\lim\limits_{h \to 0} \| E^h \|_\infty &\leq   \frac{1}{\Lambda_{\min}} \frac{2C}{|L|^q}.
\end{align*}
Notice however that because the bound for the smallest eigenvalue of matrix $N$, $\Lambda_{\min}$, decays exponentially with $L$, 
the convergence of our scheme as $L$ goes to infinity is not established. 
Nonetheless, our numerical examples show that for fixed $L$ the algorithm does converge at order $\rmO(h^2)$,
which is to be expected when $h^2> L^{-q}$.
We suspect that because the right hand side, $f$, decays algebraically and satisfies the compatibility conditions
 $\langle f,1 \rangle = \langle f, x \rangle =0$, just like in the analytical setting (see Section \ref{s:diffusive_operators}),
  the solution avoids small wavenumbers,  allowing for the convergence of the scheme for large values of $L$.

\section{Numerical illustrations}
Here we consider a series of examples to illustrate the usefulness of
our numerical schemes.

\subsection{Dirichlet Problem}
\label{s:Dirichlet}
Consider the Dirichlet problem
\begin{equation}
\begin{cases}
\mathcal{L} \ast u(x) = f(x), & x \in (-L, L), \\ 
u(x) = \text{sech}(x), & x \in (-L,L)^c,
\end{cases}
\end{equation}
with
\begin{equation*}
\nu(y) = \frac{1}{2}e^{-|y|}, \quad f(x) = \text{sech}(x)- \frac{1}{2}e^{-x}\log(1+e^{2x}) - \frac{1}{2}e^{x}\log(1+e^{2x}) + xe^x.
\end{equation*}
Then by direct calculation, we have that
\begin{gather*}
f_1(h) = \frac{1}{2}h^{-2} \bigg[ 2 - e^{-h}(h^2 + 2h +2)\bigg] = \frac{h}{6} + O(h^2), \\[.2cm]
f_2(h) = \frac{1}{3} h^5 + O(h^6) \quad , \quad  f_3(h) = \frac{1}{5} h^5 + O(h^6) \quad , \quad f_4(h) = 2 h^2 + O(h^3).
\end{gather*}
Using remark \ref{r:order}, 
we should generically expect the scheme to converge at rate $O(h^2)$.
In this particular case, $\nu(y)$ has the antiderivatives
\begin{equation*}
F(y) = \frac{1}{2}e^{-|y|} \quad , \quad F'(y) = -\frac{1}{2} \text{sign} (y) e^{-|y|} \quad , \quad F''(y) = \frac{1}{2}e^{-|y|}.
\end{equation*}
As we show in Appendix \ref{a:weights_calc}, all of the $w_j$ can now be
calculated as follows
\begin{equation*}
w_j =
\begin{cases}
f_1(h) - F'(x_1) + \frac{1}{h}[F(x_{2}) -F(x_{1})], & |j|= 1 \\[.2cm]
\frac{1}{h}[F(x_{j+1}) - 2F(x_{j})+F(x_{j-1})], & 1< |j| <M \\[.2cm]
F'(x_M) + \frac{1}{h}[F(x_{M-1}) -F(x_{M})], & |j| = M
\end{cases}
\end{equation*}
and $w_0 = 0$. We also have that the integral $A$, defined in \eqref{e:def_A},
can be directly computed and is given by
\[A = e^{-L_W}.\]
The integral $B_i$, defined in \eqref{e:def_Bi}, can also be calculated directly and is given by
\begin{equation*}
B_i = \frac{1}{2}e^{x_i} \log(e^{-2L_W}+e^{2x_i})-e^{x_i}x_i + \frac{1}{2}e^{-x_i} \log(e^{-2L_W}+e^{-2x_i})+e^{-x_i}x_i.
\end{equation*}

We then have all the necessary quantities to implement the numerical
scheme for the Dirichlet problem
introduced in section \ref{ss:scheme_Dirichlet}. 
First note that the true solution of
this problem is in fact $u(x) = \text{sech}(x)$; this can be confirmed
by a straightforward integration. Fig.~\ref{Fig: Fig1}(a) shows a plot
of the solution $u(x)$ and the forcing function $f(x)$. 
Fig.~\ref{Fig: Fig1}(b) shows the $L^\infty$ error between the numerical solution
and the true solution for varying values of $L$ and $h$. We see that
for fixed $L$ the scheme does indeed converge at an $O(h^2)$ rate.

\begin{figure}
\subfloat[]{\includegraphics[width=.5\textwidth]{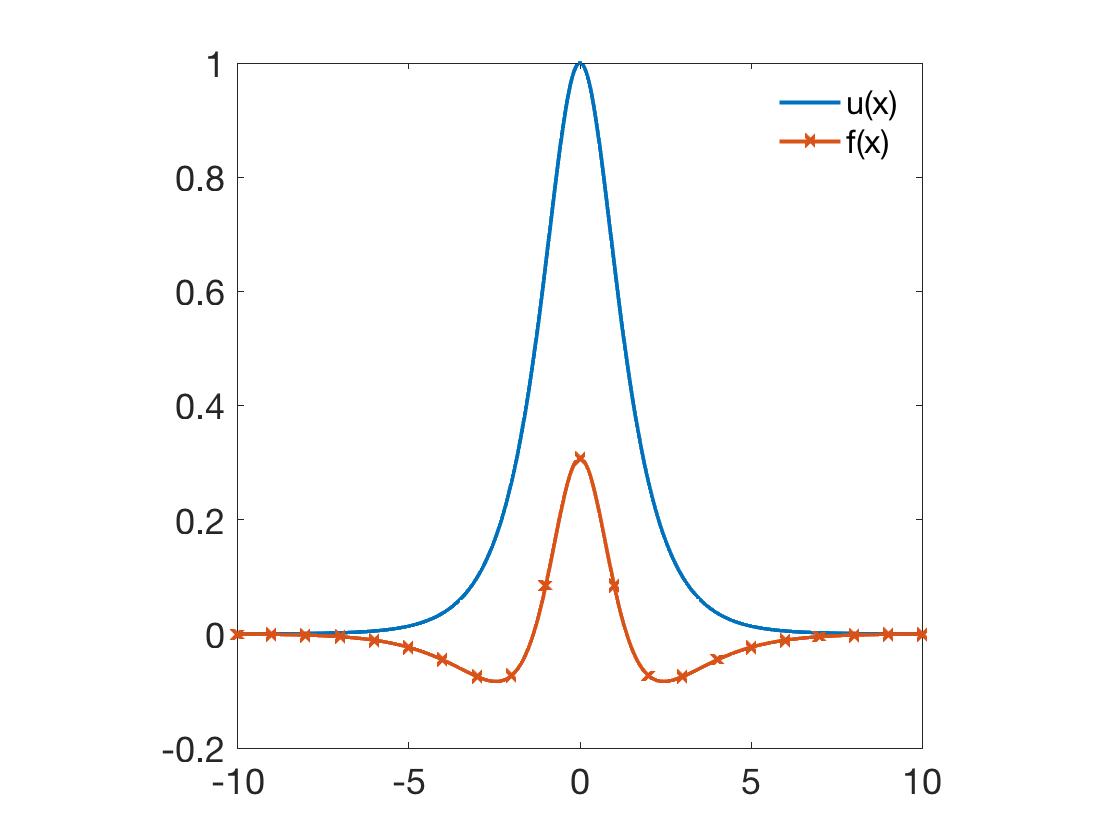}}
\subfloat[]{\includegraphics[width=.5\textwidth]{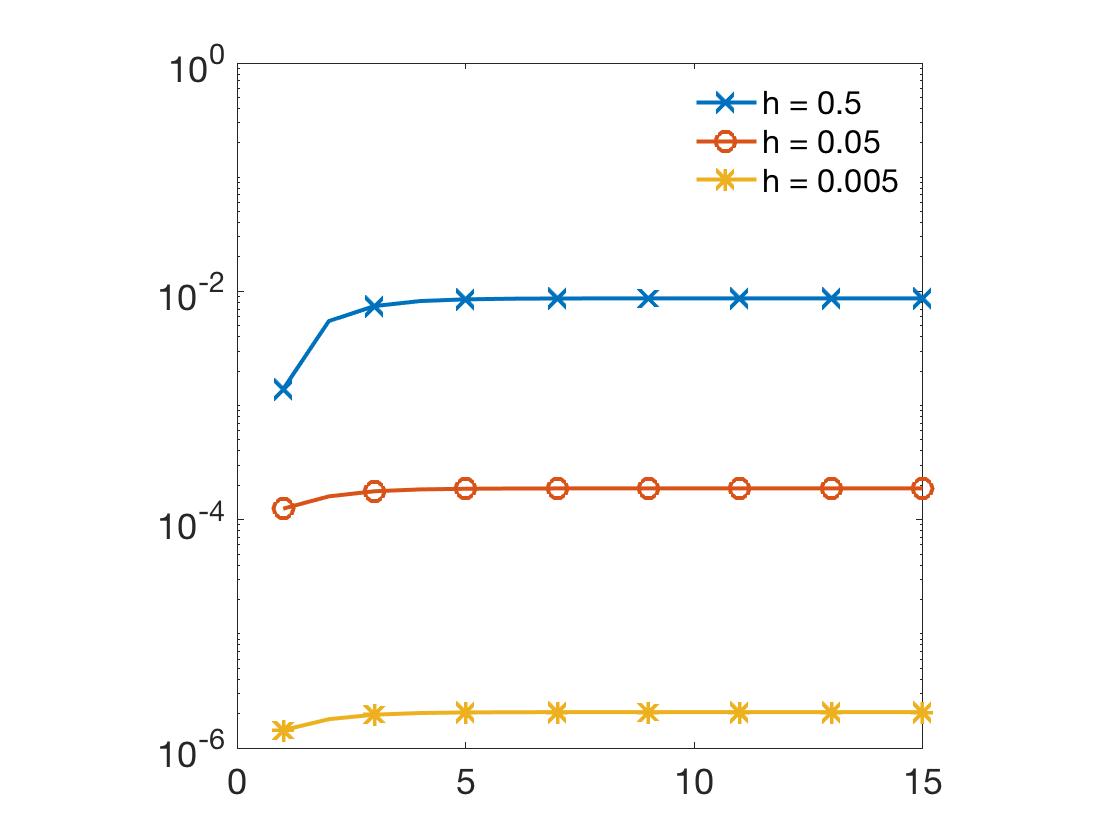}}
\caption{(a) Plot of the solution $u(x) = \text{sech}(x)$ and the
corresponding forcing function $f(x)$. (b) Plot of he $L^\infty$
error between the numerical solution and the true solution for
varying values of $L$; each curve represents a different choice in
the spatial step size $h$.}\label{Fig: Fig1}
\end{figure}

\subsection{Whole Real Line Problem}
\label{s:real_line}
Consider the extended Dirichlet problem
\begin{equation*}
\mathcal{L} \ast u(x) = f(x),\quad x \in \mathbb{R}
\end{equation*}
with kernel
\begin{equation*}
\nu(y) = \frac{1}{2}e^{-|y|}.
\end{equation*}

As the quantities $f_k$, $w_j$ and $A$ have been computed in the 
previous section, to implement the numerical scheme introduced in section 
\ref{ss:scheme_whole_real_line}, we only need to compute the quantities $B^1_i$ 
and $B^2_i$ that are defined in \eqref{e:def_B1i_B2i}.
For this, let us assume for the moment that
\[g(x) = \frac{1}{|x|^p}\]
for some $p > 0$. We then have
\begin{align*}
B_i^1 &= \frac{1}{g(-L)} \int_{-\infty}^{-L_W}g(x_i -y) \nu(y) \; dy \\[.2cm]
&= L^p \int_{-\infty}^{-L_W}\frac{1}{|x_i-y|^p} \; \frac{1}{2}e^{-|y|} \; dy \\[.2cm]
&= L^p \int_{-\infty}^{-L_W}\frac{1}{(x_i-y)^p} \; \frac{1}{2}e^{y} \; dy
\end{align*}
where the last equality is obtained by noting that $x_i > y$ for all
$y \in [-\infty,-L_W]$. Doing two changes of variables and simplifying
yields that the above is equal to
\begin{equation*}
L^p \; \frac{1}{2}e^{x_i} \; \frac{1}{(L_W+x_i)^{p-1}} \int_{1}^{\infty}\frac{1}{z^p} \; e^{-(L_W+x_i)z} \; dz.
\end{equation*}
This last integral is exactly of the form of the generalized
exponential integral function $E_p$. We note that this function can be
computed quickly to a given accuracy and there are many public codes
for doing exactly this. Thus, we have
\begin{equation*}
B_i^1 = L^p \; \frac{1}{2}e^{x_i} \; \frac{1}{(L_W+x_i)^{p-1}} E_p(L_W+x_i).
\end{equation*}
Likewise, it can be shown that
\begin{equation*}
B_i^2 = L^p \; \frac{1}{2}e^{-x_i} \; \frac{1}{(L_W-x_i)^{p-1}} E_p(L_W-x_i).
\end{equation*}

We've then shown that if there exists a solution $u(x)$ that decays
algebraically with order $p_0$ then the above numerical scheme 
should give a good approximation by setting $p=p_0$ in the definition of $g$.
To demonstrate this, we will apply the above scheme to a known 
solution. In particular, let
\begin{equation*}
f(x) = \frac{1}{1+x^2} - \frac{1}{2} \; \frac{1}{1+(x-a)^2} - \frac{1}{2} \; \frac{1}{1+(x+a)^2}
\end{equation*}
for some constant $a>0$ and note that
\begin{equation*}
\int^\infty_{-\infty} f(x) \; dx = 0 \quad , \quad \int^\infty_\infty x f(x) \; dx = 0.
\end{equation*}
Hence we know a corresponding solution $u(x)$ will exist. In fact, for
this particular problem, it can be shown by direct substitution that
\begin{equation*}
u(x) = f(x) - \int_{-\infty} ^x \int_{-\infty}^w f(y) \; dy \; dw
\end{equation*}
is a solution. Integrating directly gives
\begin{align*}
u(x) =& \; \bigg[ \frac{1}{1+x^2} - \frac{1}{2} \; \frac{1}{1+(x-a)^2} -
\frac{1}{2} \; \frac{1}{1+(x+a)^2} \bigg] + \frac{1}{2} \; \bigg[ \log(x^2+1) \\[.2cm]
& - \frac{1}{2} \; \log\big((x^2-a^2)^2+2(x^2 + a^2) + 1\big) \bigg] - \frac{1}{2}x\bigg[2\tan^{-1}(x) \\[.2cm]
& - \tan^{-1}(x+a) + \tan^{-1}(x-a)\bigg] +\bigg[ \frac{a}{4}\pi - \frac{a}{2} \tan^{-1}\big(\frac{1 + x^2 - a^2}{2a}\big) \bigg].
\end{align*}
Letting $a=1$, we have that
\begin{align*}
f(x) &= -\frac{3x^2 - 2}{x^6 + x^4 +4x^2 + 4} \\[.2cm]
& \sim  - \frac{3}{x^4}
\end{align*}
and by Taylor expanding about $\infty$ it can be shown that
\begin{align*}
u(x) & \sim  \frac{1}{2x^2}
\end{align*}
as $|x| \to \infty$. Choosing $p=2$, all quantities in the numerical
scheme have been computed and it can now be
implemented. Fig.~\ref{Fig: Fig2}(a) shows a plot of the true solution
$u(x)$ and the forcing function $f(x)$. Fig.~\ref{Fig: Fig2}(b) shows
the $L^\infty$ error between the numerical solution and the true
solution for varying values of $L$ and $h$. We see that for sufficiently 
large $L$ the scheme seems to converge at an $O(h^2)$ rate as well.

\begin{figure} 
\subfloat[]{\includegraphics[width=.5\textwidth]{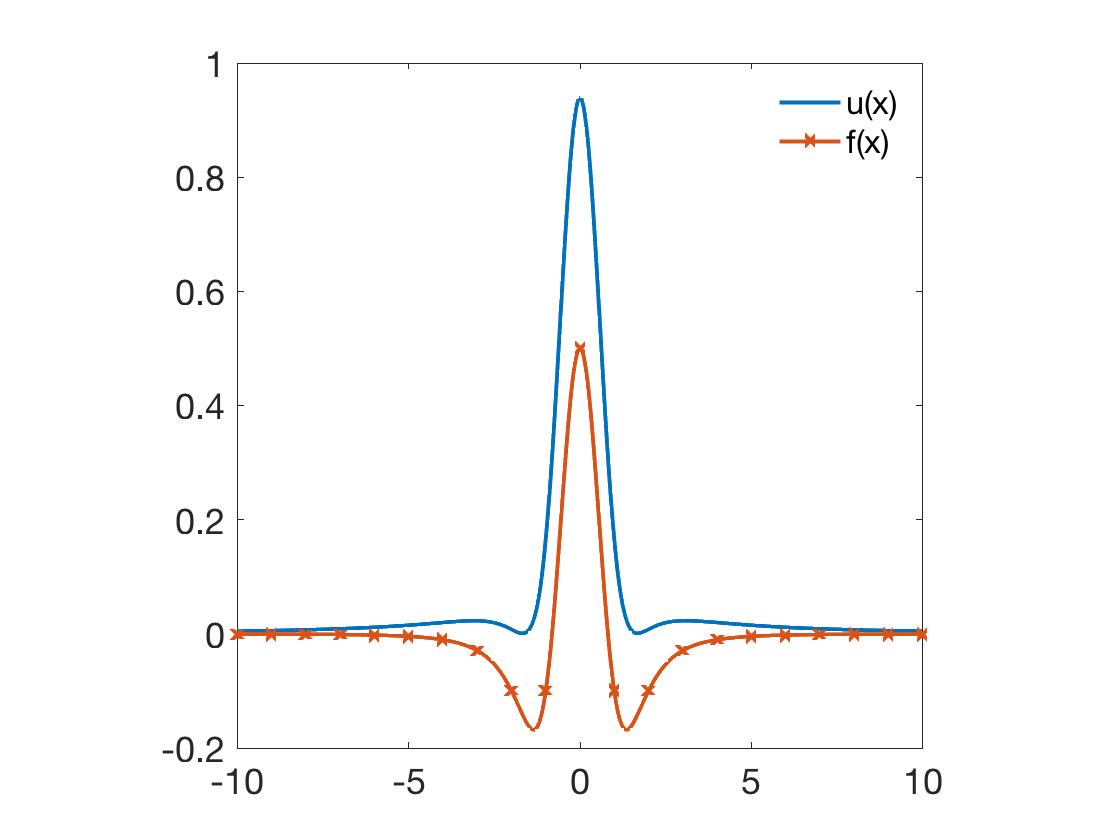}}
\subfloat[]{\includegraphics[width=.5\textwidth]{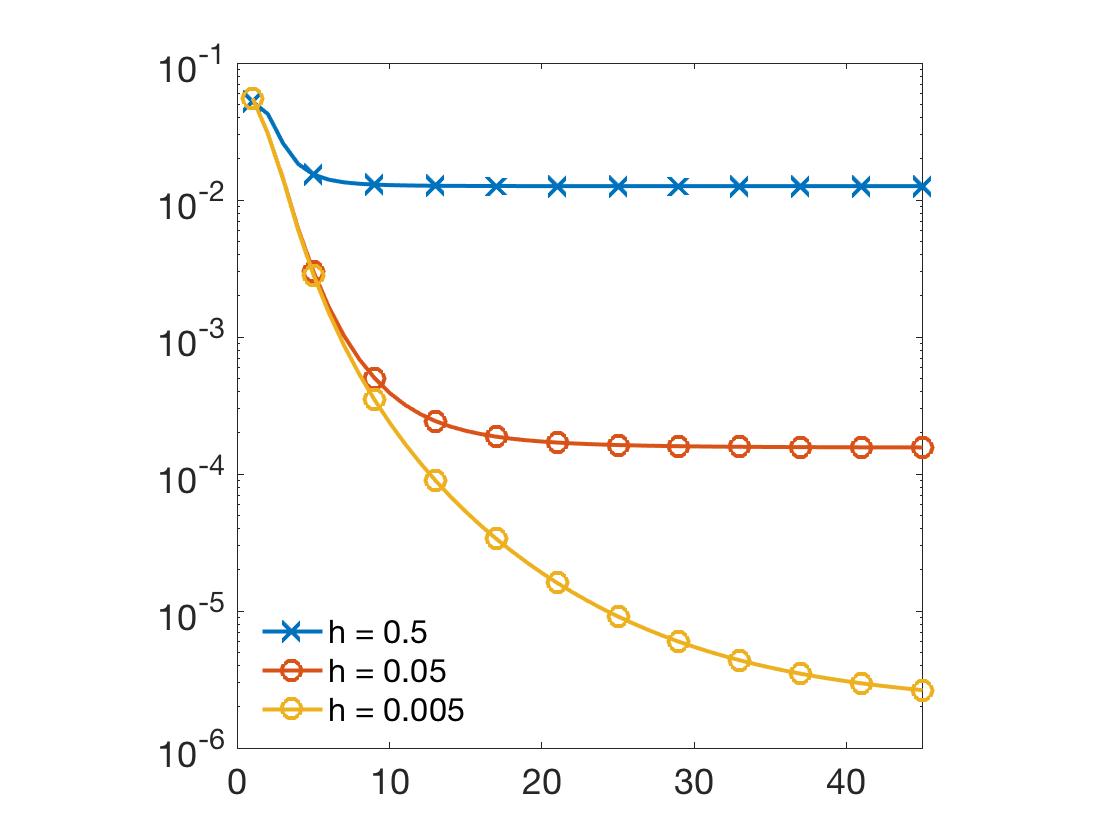}}
\caption{(a) Plot of the algebraically decaying solution $u(x)$ and
 the corresponding forcing function $f(x)$. (b) Plot of he $L^\infty$
 error between the numerical solution and the true solution for
 varying values of $L$; each curve represents a different choice in
 the spatial step size $h$.}\label{Fig: Fig2}
\end{figure}

\subsection{Neumann Problem}
\label{s:Neumann}
Here we consider the Neumann problem
\begin{equation*}
\mathcal{L} \ast u(x) = \bar{f}(x), \quad x \in \mathbb{R}
\end{equation*}
where $\nu(x) = \frac{1}{2} e^{-|x|}$ and
\begin{equation*}
\bar{f}(x) =
\begin{cases}
x^2 -\frac{2}{3}, \quad &x \in (-1,1), \\[.2cm]
\frac{1}{x^4}, \quad &x \in (-1,1)^c.
\end{cases}
\end{equation*}
In this case, it can be shown that the corresponding solution is given
by
\begin{equation*}
u(x) =
\begin{cases}
x^2 -\frac{(x^2-3)(x-1)(x+1)}{12} - \frac{5}{6}, \quad &x \in (-1,1), \\[.2cm]
\frac{1}{x^4} - \frac{1}{6x^2}, \quad &x \in (-1,1)^c.
\end{cases}
\end{equation*}
Note that both $\bar{f}$ and $u(x)$ decay algebraically at the same
rate as before; namely, $\bar{f} \sim \frac{1}{x^4}$ and $u(x) \sim\frac{1}{x^2}$.
Hence, we can apply the numerical method from the
previous section without change. Further note that $\bar{f}$ and
$u(x)$ are not continuous nor differentiable at $x = \pm 1$; see
Fig.~\ref{Fig: Fig3}(a). In deriving the numerical schemes from
previous sections, we implicitly used that the solution $u(x)$ was
many times differentiable. This was done not only to derive formulas
but also to get the $O(h^2)$ truncation error. Since for this
particular example differentiability doesn't hold, we might expect that
the order of convergence of the scheme is no longer $O(h^2)$. Indeed,
this is the case as Fig.~\ref{Fig: Fig3}(b) shows. Instead, it appears
the scheme converges with rate $O(h)$ for the various values of $L$.

\begin{figure}
\subfloat[]{\includegraphics[width=.5\textwidth]{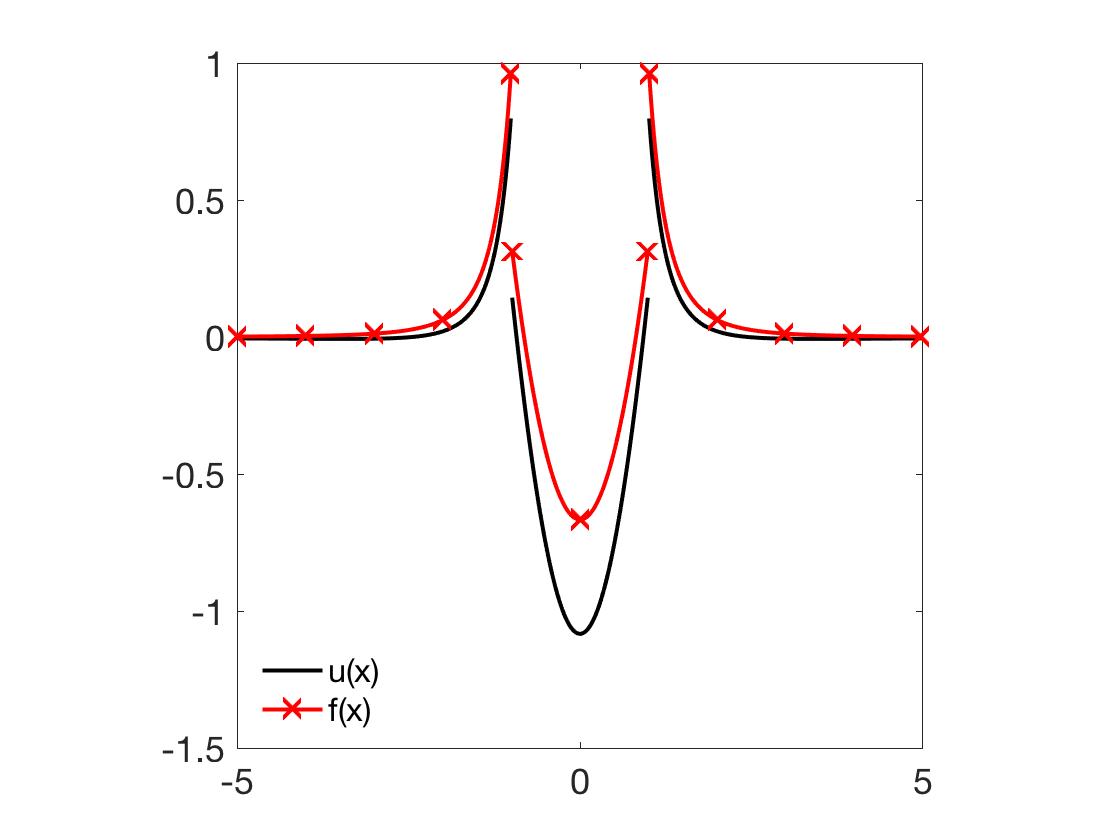}}
\subfloat[]{\includegraphics[width=.5\textwidth]{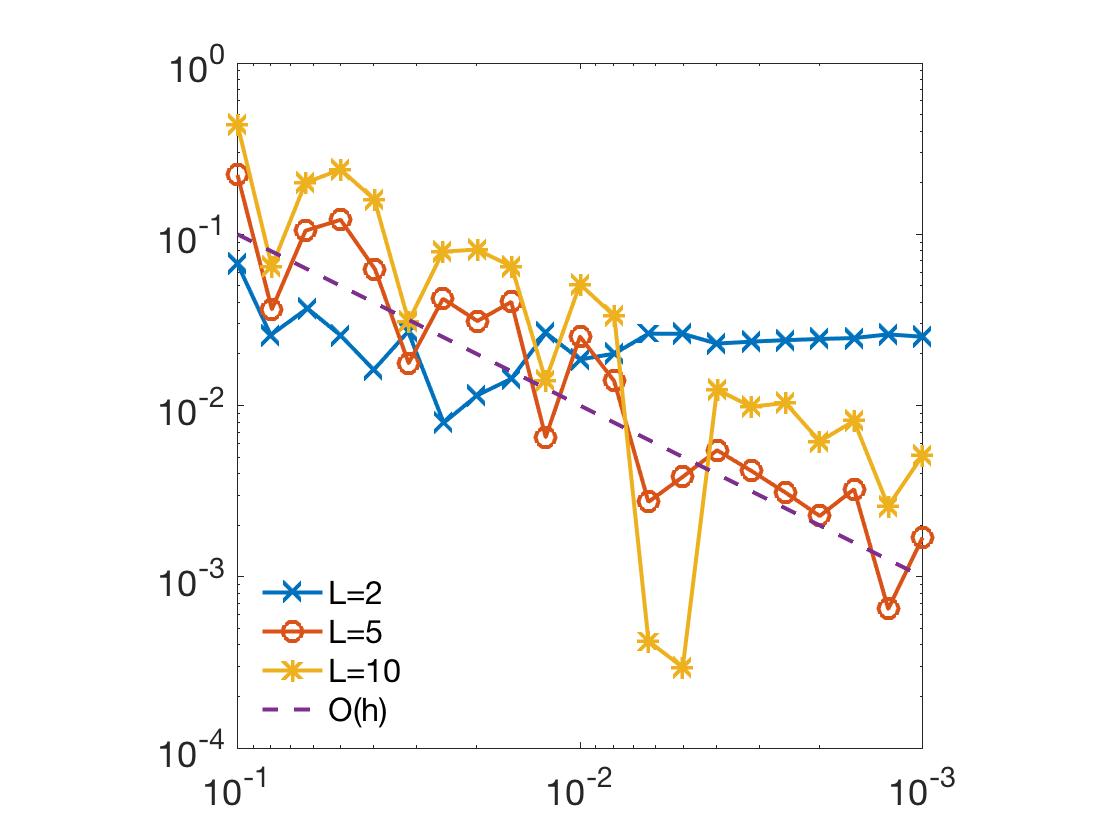}}
\caption{(a) Plot of the solution $u(x)$ and the corresponding forcing
 function $f(x)$. (b) $L^\infty$ error between the numerical solution
 and the true solution for varying values of $h$; unlike before, each
 curve represents a different choice in the computational domain
 $L$.}\label{Fig: Fig3}
\end{figure}

\subsection{Comparison of the Boundary Conditions}
In this section we'd like to test how the different boundary
conditions, and their corresponding numerical schemes, compare in
solving the problem on the whole real line. With this in mind we
consider the problem
\begin{equation*}
\mathcal{L} \ast u(x) = f(x),\quad x \in \mathbb{R},
\end{equation*}
with kernel
\begin{equation*}
\nu(y) = \frac{1}{2}e^{-|y|}
\end{equation*}
and two different forcing functions. In the first case we take
\begin{equation*}
f(x) = \text{sech}(x)- \frac{1}{2}e^{-x}\log(1+e^{2x}) - \frac{1}{2}e^{x}\log(1+e^{2x}) + xe^x
\end{equation*}
for which we know the solution is $u(x) = \sech(x)$.

Fig.~\ref{Fig: Fig4} shows the $L^\infty$ error between the numerical
solution and the true solution for the different boundary
conditions. Fig.~\ref{Fig: Fig4}(a) shows the whole real line
numerical scheme where we've used the asymptotic decay rate of
$\frac{1}{x^2}$ outside of $(-L,L)$. Fig.~\ref{Fig: Fig4}(b)
corresponds to the Dirichlet problem with homogeneous boundary
conditions: $g(x) = 0$ outside $(-L,L)$. Finally,  
Fig.~\ref{Fig: Fig4}(c) corresponds to the Neumann problem with 
homogeneous boundary conditions: $g(x) = 0$ outside $(-L,L)$. 
To be clear, we're setting up the Neumann problem on a uniform 
grid in $(-2L, 2L)$ so that in order to use the previous numerical 
scheme we have to select $L_W  \geq 4L$. Although this requires 
roughly twice the computational cost of the other two methods it 
nevertheless compares the effectiveness of the boundary conditions. 
As the solution $u(x)$ decays exponentially, we note that enforcing
homogeneous boundaries condition is consistent with the original problem 
when $L$ is large enough.
With this in mind, it's clear that for large enough $L$ any of the
three schemes retains the $O(h^2)$ converge rate. 

As shown in Fig.~\ref{Fig: Fig4}(c) , the Neumann formulation requires
larger value of $L$ to obtain a similar convergence behavior than the other 
formulations. It is a consequence of $\bar{f}$ being discontinuous even for large 
value of $L$. However the Neumann formulation is still expected to converge with 
a $O(h^2)$ rate as the function $\bar{f}$ will appear continuous to machine 
precision when $L$ is large enough.
\begin{figure}
\centering
\subfloat[]{\includegraphics[width=.5\textwidth]{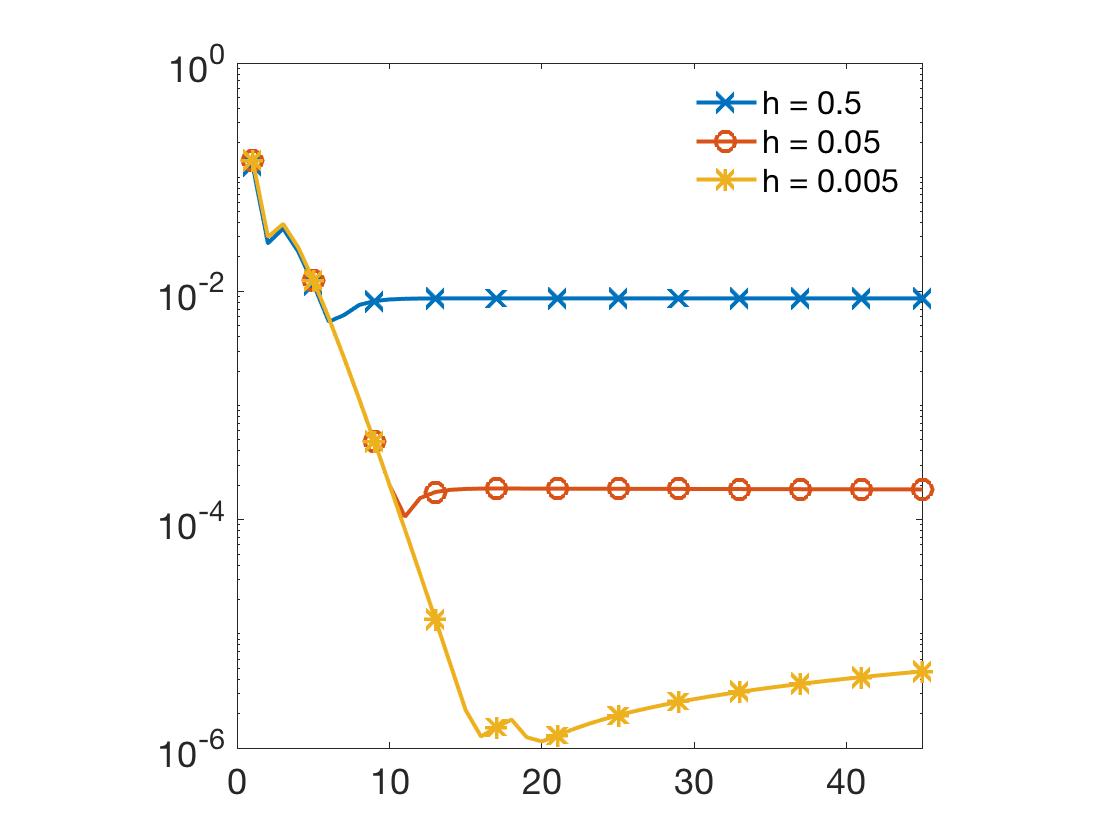}}\\
\subfloat[]{\includegraphics[width=.5\textwidth]{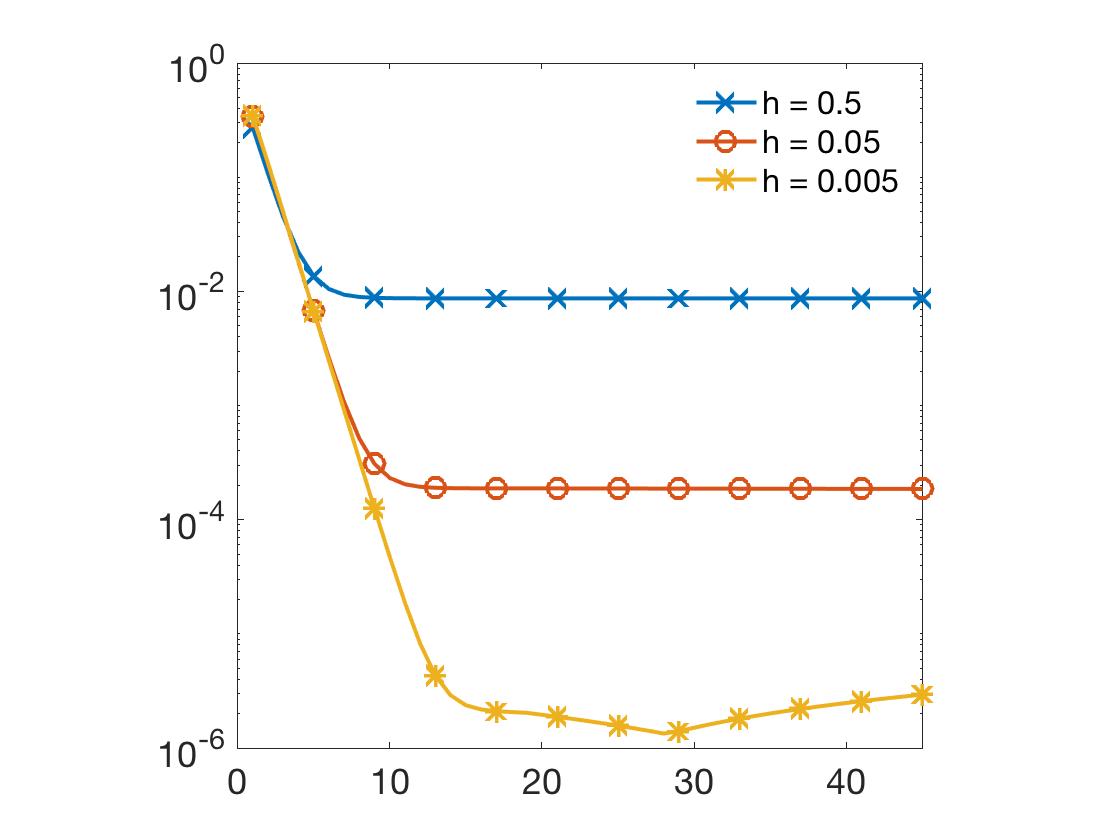}}
\subfloat[]{\includegraphics[width=.5\textwidth]{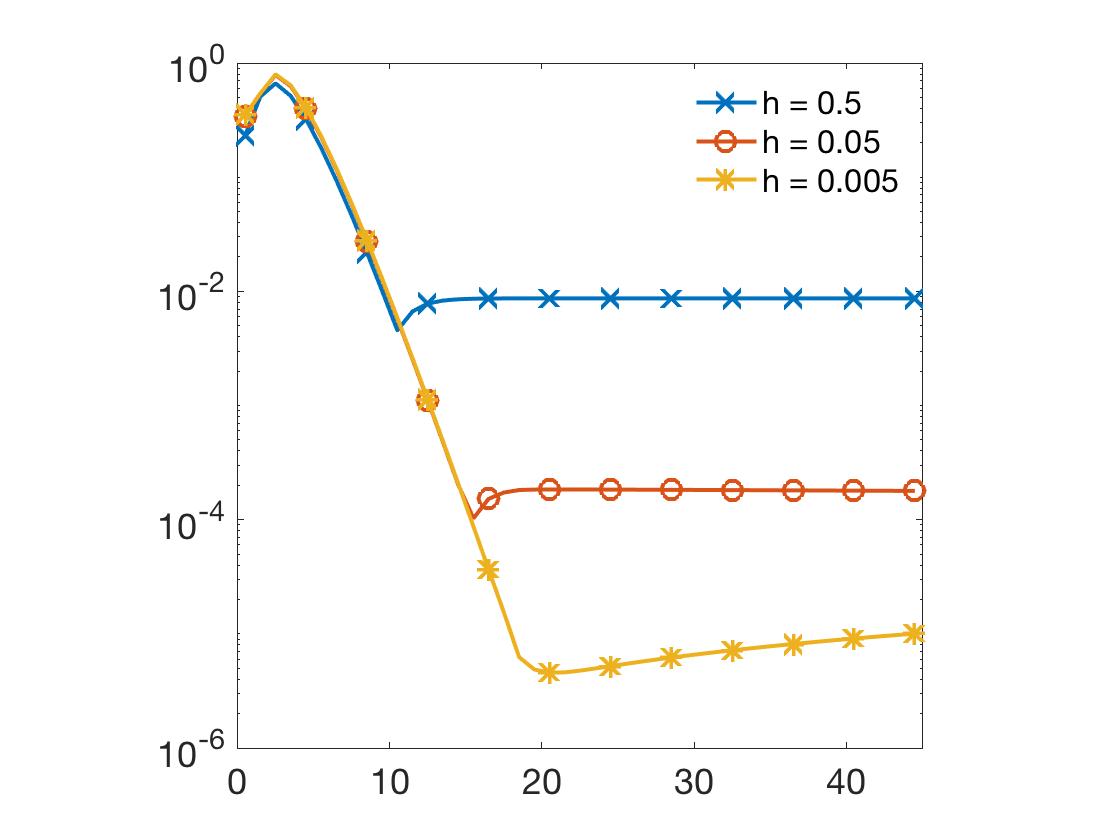}}
\caption{$L^\infty$ error between the numerical solution and the true
 solution for varying values of $L$; each curve represents a
 different choice in the spatial step size $h$. Panel (a) shows the
 results when the algebraic decay is taken into account. Panel (b)
 corresponds to homogeneous Dirichlet boundary conditions. Panel (c)
 corresponds to homogeneous Neumann boundary conditions where we've
 taken $\tilde{L} = \frac{L}{2}$.}\label{Fig: Fig4}
\end{figure}

In the second case we take
\begin{equation*}
f(x) = \frac{1}{1+x^2} - \frac{1}{2} \; \frac{1}{1+(x-a)^2} -
\frac{1}{2} \; \frac{1}{1+(x+a)^2}
\end{equation*}
with $a = 1$ for which we also know the solution. Fig.~\ref{Fig: Fig5}
is the companion figure to Fig.~\ref{Fig: Fig4}.  Fig.~\ref{Fig:
  Fig5}(a) shows the whole real line numerical scheme where we've used
the asymptotic decay rate of $\frac{1}{x^2}$ outside of
$(-L,L)$. Fig.~\ref{Fig: Fig5}(b) corresponds to the Dirichlet problem
with homogeneous boundary conditions: $g(x) = 0$ outside
$(-L,L)$. Finally,  Fig.~\ref{Fig: Fig5}(c) corresponds to the Neumann 
problem with homogeneous boundary conditions, $g(x) = 0$ outside 
$(-L,L)$, and we computed this numerically in the same way as was 
described in the previous paragraph. 
As the use of exact solutions to set boundary conditions is not feasible and 
realistic for physical applications, we use homogeneous conditions for the 
Dirichlet and Neumann problems. It allows us to compare the behavior of the 
three schemes when the solutions is not exponentially decaying and that only its 
order of algebraic decay is known for large $x$.
Unlike the previous case the
situation here is much different. Namely, the whole real line method
is by far more accurate than either of the other two. For the
Dirichlet condition it's because of the slow algebraic decay of the
solution; on the domains considered $u(x)$ does not fall below
$10^{-4}$, making this a lower bound on the error for any values of
$h$. For the Neumann problem it's even worse because, in addition to
the slow decay rate, we have a discontinuity which is detectable to
machine precision for all values of $L$ considered. Hence, 
decreasing $h$ will not decrease the error because the discontinuity 
will not vanish.

\begin{figure}
\centering
\subfloat[]{\includegraphics[width=.5\textwidth]{ExtDirchError.jpg}} \\
\subfloat[]{\includegraphics[width=.5\textwidth]{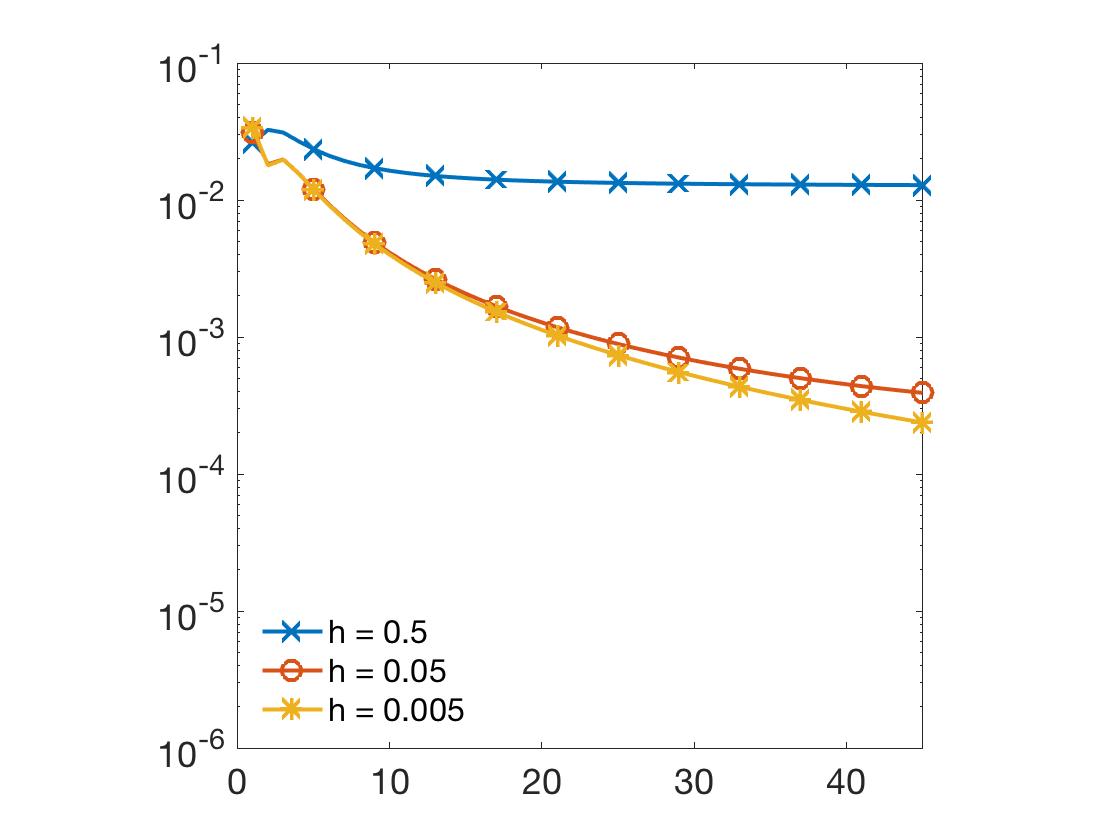}}
\subfloat[]{\includegraphics[width=.5\textwidth]{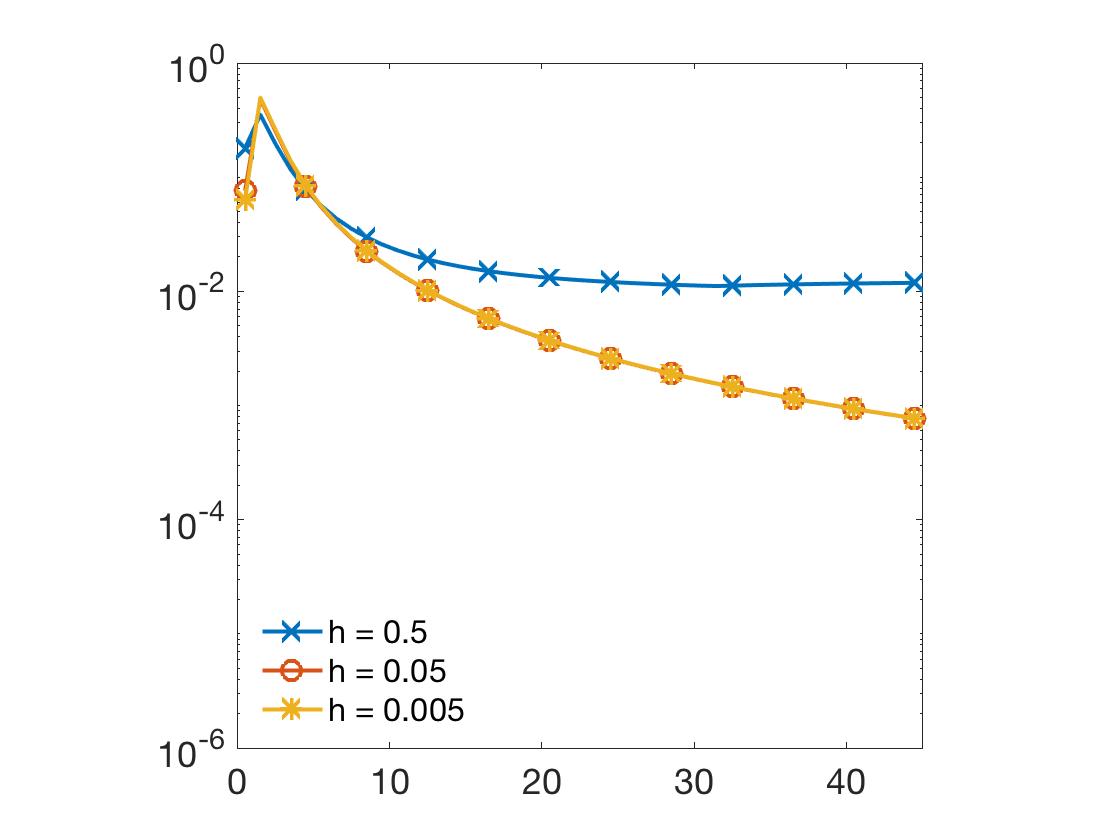}}
\caption{$L^\infty$ error between the numerical solution and the true
 solution for varying values of $L$; each curve represents a
 different choice in the spatial step size $h$. Panel (a) shows the
 results when the algebraic decay is taken into account. Panel (b)
 corresponds to homogeneous Dirichlet boundary conditions. Panel (c)
 corresponds to homogeneous Neumann boundary conditions where we've
 taken $\tilde{L} = \frac{L}{2}$.}\label{Fig: Fig5}
\end{figure}

\subsection{Dirichlet Problem with Not Strictly Positive Kernel}
\label{s:Dirichlet_kernel_nonposi}
Consider the Dirichlet problem
\begin{equation}
\begin{cases}
\mathcal{L} \ast u(x) = f(x), & x \in (-L, L), \\ 
u(x) = \text{sech}(x), & x \in (-L,L)^c,
\end{cases}
\end{equation}
with
\begin{align*}
\nu(y) &= \frac{3}{2}e^{-|y|}- 2e^{-2|y|}, \\[.1cm]
 f(x) &=  4\,{{e}}^{-x}+\frac{1}{{\cosh}\left(x\right)}-\frac{3\,\ln\left({{e}}^{2\,x}+1\right)\,{{e}}^{-x}}{2}+2\,{{e}}^x\,\left(2\,{\tan^{-1}}\left({{e}}^x\right)\,{{e}}^x-\pi \,{{e}}^x+2\right)\\
 &-4\,{\tan^{-1}}\left({{e}}^x\right)\,{{e}}^{-2\,x}+\frac{3\,{{e}}^x\,\left(2\,x-\ln\left({{e}}^{2\,x}+1\right)\right)}{2}
.
\end{align*}
Then by direct calculation, we have that
\begin{gather*}
f_1(h) =  \frac{1}{h^2}\big[\frac{{{e}}^{-2\,h}\,\left(2\,h^2+2\,h+1\right)}{2}-\frac{3\,{{e}}^{-h}\,\left(h^2+2\,h+2\right)}{2}+\frac{5}{2}\big] = -\frac{1}{6}h + O(h^2) ,\\[.2cm]
f_2(h) = -\frac{1}{6} h^5 + O(h^6) \quad , \quad f_3(h) = -\frac{1}{10} h^5 + O(h^6) \quad , \quad f_4(h) = \frac{1}{2} h^2 + O(h^3).
\end{gather*}
Hence, we should generically expect the scheme to converge at rate
$O(h^2)$.

In this particular case, $\nu(y)$ has the antiderivatives
\begin{equation*}
F(y) = \frac{3}{2}{e}^{-\left|y\right|}-\frac{1}{2}{{e}}^{-2\,\left|y\right|} \; \;  , \; \;  F'(y) = -{\text{sign}}\left(y\right)\,\left(\frac{3}{2}\,{{e}}^{-\left|y\right|}-{{e}}^{-2\,\left|y\right|}\right) \; \;  , \; \;  F''(y) = \frac{3}{2}e^{-|y|}- 2e^{-2|y|},
\end{equation*}
so that now  all of the $w_j$ can  be calculated. We also have that the integral $A$ can be directly
computed and is given by
\[A = 3 e^{-2L_W} - 2 e^{-4L_W}.\]
The integral $B_i$ can also be calculated directly and is given by
\begin{align*}
B_i =& 4\,{{e}}^{-2\,x_i}\,{\tan^{-1}}\left({{e}}^{-L}\,{{e}}^{x_i}\right)-4\,{{e}}^{-L-x_i}+\frac{3\,{{e}}^{-x_i}\,\left(2\,x_i+\ln\left({{e}}^{-2\,L}+{{e}}^{-2\,x_i}\right)\right)}{2}\\[.1cm]
&-\frac{3\,{{e}}^{x_i}\,\left(2\,x_i-\ln\left({{e}}^{-2\,L}+{{e}}^{2\,x_i}\right)\right)}{2}-4\,{{e}}^{x_i}\,\left({{e}}^{-L}-{\tan^{-1}}\left({{e}}^{-L}\,{{e}}^{-x_i}\right)\,{{e}}^{x_i}\right)
.
\end{align*}

We then have all the necessary quantities to implement the numerical
scheme for the Dirichlet problem. First note that the true solution of
this problem is in fact $u(x) = \text{sech}(x)$; this can be confirmed
by a straightforward integration. Fig.~\ref{Fig: Fig6}(a) shows a plot
of the solution $u(x)$ and the forcing function $f(x)$. 
Fig.~\ref{Fig: Fig6}(b) shows the $L^\infty$ error between the numerical solution
and the true solution for varying values of $L$ and $h$. We see that
for fixed $L$ the scheme does indeed converge at an $O(h^2)$ rate.

\begin{figure}
\subfloat[]{\includegraphics[width=.5\textwidth]{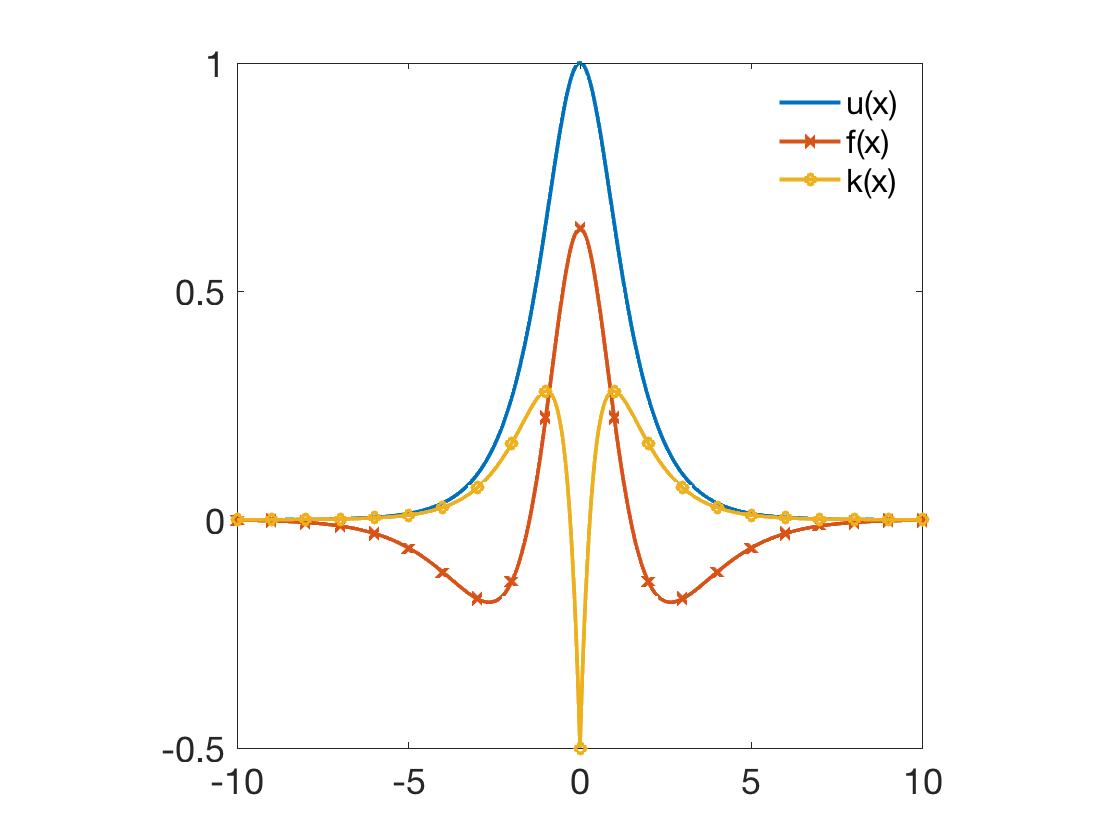}}
\subfloat[]{\includegraphics[width=.5\textwidth]{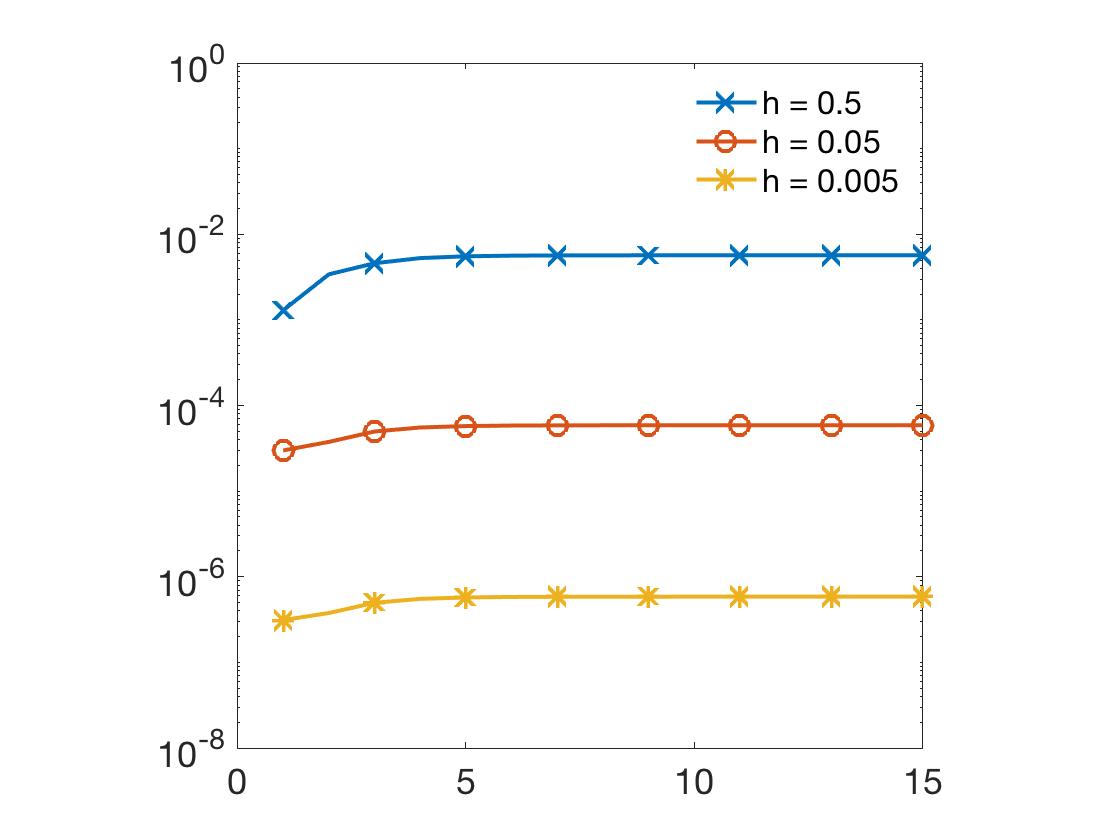}}
\caption{(a) Plot of the solution $u(x) = \text{sech}(x)$ and the
corresponding forcing function $f(x)$. (b) Plot of he $L^\infty$
error between the numerical solution and the true solution for
varying values of $L$; each curve represents a different choice in
the spatial step size $h$.}\label{Fig: Fig6}
\end{figure}

\section{Conclusion}

In this paper we consider integro-differential equations that model 
the evolution process of quantities that experience a nonlocal form of dispersion.
In particular, we look at diffusion processes that are modeled
using convolution kernels that do not have compact support and 
decay exponentially at infinity. We develop algorithms for finding the steady states
of these systems. 

In contrast to previous methods which assume local boundary data, 
our numerical method accounts for the correct nonlocal nature of the boundary conditions.
We present three numerical schemes addressing the case of nonlocal Dirichlet
boundary conditions (DP), Neumann boundary conditions (NP), 
and the whole real line problem (RP).

When the equation is posed on the whole real line, we show that a
unique solution exist, provided the right hand side decays 
at least algebraically and has zero mean and first moment. 
More importantly, the result shows that there is a relation between the decay of 
the right hand side and the decay of the solution. 
This information is then used to approximate the solution 
outside the computational domain and thus develop a scheme for RP.

 Since nonlocal Neumann boundary conditions require us to approximate the solution 
outside a bounded domain, the numerical schemes for NP and RP are 
almost identical. In both cases the scheme boils down to inverting a matrix equation. 
We are able to show using a Neumann series that this matrix is invertible
provided the convolution kernel is nonnegative. 
This also proves the convergence of the scheme.

For the Dirichlet problem, we consider kernels that can take on negative values,
but that have positive tails. We show, using the theory of Toeplitz matrices,
 that this is enough to prove the convergence of the
our numerical scheme. This is an improvement over previous results which are based on 
maximum principles and therefore require nonnegative kernels with compact support.

Finally, for applications where the model equations are posed on the whole real line, 
there is always a question of what are the best boundary conditions one can use
to approximate solutions.
Here we present a numerical scheme that does not require explicit
boundary conditions. However, we do find that in certain circumstances,
mainly when the right hand side decays exponentially,  the
Dirichlet problem provides a more efficient method for 
approximating the whole real line problem. First, because one 
can reduce the size of the computational domain, and secondly one does not 
have to approximate the solution outside this domain,
i.e. setting the solution to zero gives a good approximation.

\appendix

\section{ Nonlocal Flux, Gauss Theorem, and an Example}\label{s:calculus}
In this section we summarize results presented in \cite{gunzburger2012, gunzburger2013}, 
which generalize the concept of flux to include short as well as long
range movement of particles. 
Then, in Subsection \ref{ss:example} we use a very general and well known population model 
as an example of how this generalized version of flux, together with
conservation of mass, gives rise to equation \eqref{e:evolution}.
Similar derivations have been done in \cite{andreu2010}.

As already pointed out in the introduction
variations of equation \eqref{e:evolution} 
have been introduced in other contexts. 
Here we restrict ourselves to the population model, since we believe it provides
a simple  example where one can apply the notion of nonlocal Neumann boundary conditions
presented in \cite{gunzburger2012, gunzburger2013}. 
For more information about other nonlocal models, the review paper by
Fife \cite{fife2003} provides a good starting point for the case.

 \subsection{Nonlocal Flux}
To give an intuitive notion of what constitutes a nonlocal flux, we
first recall the traditional definition of this term. In physical
applications flux represents the rate of motion per unit area of a
quantity $u$ (fluid, concentration, number of particles) across some
boundary. Implicit in this definition is the assumption that the
transport of this quantity happens at small scales. However, in
certain applications transport can occur over long, as well as short,
spatial scales. Consider for example an area of vegetation with seeds
that can travel close to as well as far from their originating
organisms thanks to wind currents. In this case flux is no longer
proportional to a local quantity, like $u$ (transport equation) or the
gradient of $u$ (diffusion equation), but instead should be expressed
through a nonlocal operator.

We can make these ideas more precise by looking again at our
vegetation example. For simplicity assume for now that we only have
one organism at position $y$, whose seeds are entering a field
$\Omega$. Suppose as well that we have a function $\phi(x,y,t)$ that
tells us the proportion of seeds from position $y \notin \Omega$ that
fall in location $x \in \Omega$ per unit time. Then the flow of seeds
from $y$ into region $\Omega$ is given by the integral
\[ \int_\Omega \phi(x,y,t)\;dx.\]

More generally, one can construct a function $\psi(x,y,t)$ such that
\[ \int_\Omega \psi(x,y,t)\;dx,\]
 represents a nonlocal flux density. Then, the expression
\[ \int_{\Omega_1} \int_{\Omega_2} \psi(x,y,t) \;dx\;dy\]
gives us the net nonlocal flux from region $\Omega_1$ into region
$\Omega_2$. If this expression is positive then indeed we have net
flux from $\Omega_1$ into $\Omega_2$. On the other hand, if this
quantity is negative, then the net flow occurs in the reverse
direction.

For the above definition to be consistent with our intuition of how
flux should behave, one imposes an \emph{action-reaction
  principle}. Given two distinct domains $\Omega_1$, $\Omega_2$ we
would like for the nonlocal flux from $\Omega_1 $ into $\Omega_2$ to
be equal in magnitude, but of opposite sign, as the the nonlocal flux
from $\Omega_2$ into $\Omega_1$, i.e.
\[ \int_{\Omega_1} \int_{\Omega_2} \psi(x,y,t) \;dx \;dy + \int_{\Omega_2} \int_{\Omega_1} \psi(y,x,t) \;dy \;dx =0.\]
It is straightforward to check that this holds provided $\psi$ is
antisymmetric in $x$ and $y$, that is $\psi(x,y,t) = -\psi(y,x,t)$. 
Notice that this condition also implies that there are
\emph{no self interactions}, meaning that
\[\int_\Omega \int_\Omega \psi(x,y,t) \;dx \;dy=0.\]

 \subsection{Nonlocal Gauss' Theorem}
Given a bounded domain $\Omega$, Gauss' Theorem relates the total flux
across the boundary $\partial \Omega$, in terms of a volume integral
over the domain, $\Omega$. More precisely, if ${\bf F}$ represents a
smooth vector field and ${\bf n}$ the unit normal to $\Omega$, then
 \[  \int_\Omega \nabla \cdot {\bf F} \;dV = \int_{\partial \Omega} {\bf F \cdot n } \;dS. \]
 
 In the nonlocal case, the action-reaction principle provides an
 analogue to Gauss' Theorem since it relates the flux from $\Omega^c$
 into $\Omega$ in terms of an integral over $\Omega$,
 \[  \int_{\Omega^c} \int_{\Omega} \psi(x,y,t) \;dx \;dy =  \int_{\Omega} \int_{\Omega^c} - \psi(x,y,t) \;dx \;dy . \]

\subsection{Derivation equation with population dynamics example}\label{ss:example}
To illustrate our point suppose we are interested in the evolution of
a population that is able to move short as well as long distances. Let
$u(x,t)$ denote the density of this population at time $t$ and
location $x$, and let $\Omega \subset \R^n$ be a bounded domain.  Then
$m = \int_\Omega u(x,t) \;dx$ represents the total number of
individuals in $\Omega$ at time $t$.

Assume as well that the fraction of the population that flows from $x$
to $y$ depends on the distance between these two points and is
proportional to the density at point $x$. More precisely, at time $t$
the flow from $x\in \Omega$ to $y \in \Omega^c$ is is given by the
product $u(x,t)\nu(x,y)$, where we also assume that the kernel $\nu(x,y) =\nu(|x-y|)$
is symmetric and exponentially decaying. Then, the flow
from the point $x \in \Omega$ to the domain $ \Omega^c$ is given by
the integral 
\[\int_{\Omega^c} u(x,t) \nu(|x-y|) \;dy,\]
and the total flow out of $\Omega$ can be represented by the
expression
\[ \int_{\Omega} \int_{\Omega^c} u(x,t) \nu(|x-y|) \;dy\;dx.\]
Similarly, the flow from $ \Omega^c$ into the point $x \in \Omega$ can
be written as
\[ \int_{\Omega^c} u(y,t) \nu(|x-y|) \;dy\]
so that the total flow into $\Omega$ is
\[ \int_{\Omega} \int_{\Omega^c} u(y,t) \nu(|x-y|) \;dy\;dx.\]

Combining these two expressions we find that the net flow, $Q$, of the
population in/out of the domain $\Omega$ is given by
\begin{align*}
Q = & - \int_{\Omega} \int_{\Omega^c} u(x,t) \nu(|x-y|) \;dy\;dx
  +\int_{\Omega} \int_{\Omega^c} u(y,t) \nu(|x-y|) \;dy\;dx
\\ 
Q = & \int_{\Omega} \int_{\Omega^c} - (u(x,t) -u(y,t)) \nu(|x-y|)\;dy\;dx.
\\ 
Q = & \int_{\Omega} \int_{\R^n} \underbrace{ - (u(x,t) -u(y,t)) \nu(|x-y|)}_{=\psi(x,y,t)} \;dy\;dx.
\end{align*}
where the last line follows from the flow density function
$\psi(x,y,t)$ being antisymmetric, and the fact that this rules out
self-interactions.
 
By conservation of mass
\[ m_t =  \int_\Omega u_t \;dx  =\int_{\Omega}  \int_{\R^n}  - (u(x,t) -u(y,t)) \nu(|x-y|) \;dy\;dx + \int_\Omega f\;dx,\]
where $f(x)$ is a density specifying the net loss/gain of individuals
at location $x$ that combines births and deaths. Since the domain
$\Omega$ is arbitrary, the result is an evolution equation for the
variable $u$,
 \begin{equation}
u_t = -\mathcal{L} \ast u + f(x) \quad \mbox{for} \quad x \in \Omega
\end{equation}
where
\[\mathcal{L} \ast u  =  \int_{\R^n }   (u(x,t) -u(y,t))  \nu(|x-y|) \;dy. \]

Now, because the flow is nonlocal, instead of boundary conditions
across $\partial \Omega$, we need to impose conditions on $\Omega^c$:
\begin{itemize}
\item Dirichlet: One specifies the value of the function $u$ in $\Omega^c$
\[u(x)= g(x) \quad \mbox{for} \quad x \in \Omega^c.\]
\item Neumann: Since by the nonlocal Gauss' Theorem 
  $\int_{\Omega^c} \int_{\R^n}- \psi(x,y,t) \;dy\;dx $ represents the net flow in/out
  of $\Omega^c$, then the following relation specifies a given and
  fixed flow density, $f_c(x)$, from $\Omega^c$ into the domain $\Omega$,
\[  \int_{\R^n} \psi(x,y,t) \;dy  = f_c(x) \quad x \in \Omega^c.\]
\item Mixed: Given $\Omega^c = \Omega^c_1 \cup \Omega^c_2$
\begin{align*}
u(x) = g(x) & \quad \mbox{for} \quad x\in \Omega^c_1,
\\
\int_{\R^n} \psi(x,y,t) \;dy = f_c(x ) & \quad \mbox{for} \quad x \in \Omega^c_2.
\end{align*}
\end{itemize} 

\section{Nonlocal Diffusive Operators on a Lattice}
\label{a:well_posedness_nonlocal_lattice}

 Here we extend the results of  \ref{s:weighted_spaces} to operators defined
on lattices using results presented in \cite{jaramillo2019}.

We first define the analogue of the spaces $L^2_\gamma(\R), M^{2,s}_\gamma(\R),$ 
and $H^s_\gamma(\R)$ in the obvious way and
denote them by $\ell_\gamma^2(\Z)$, $m_\gamma^{2,s}(\Z)$,
$h^s_\gamma(\Z)$, respectively. As above we use $\langle, \rangle$ to
denote the pairing between dual elements, and we let $\underline{u} = \{u_j\}_{j\in \Z}$.
 In the case of lattices, the Fourier Transform is given by
\[
\begin{array}{c c c c}
\mathcal{F}_d : & \ell^2(\Z) & \longrightarrow & L^2(\mathbb{T}_1)\\ &
u = \{ u_j \}_{j \in \Z} & \longmapsto & \hat{u}(\sigma) = \sum_{j\in \Z} u_j \rme^{-2 \pi \rmi j \sigma}
\end{array}
\]
where $\mathbb{T}_1 = \R/ \Z$ is the unit circle. We can also define
discrete derivatives for elements in $\ell^2(\Z),$
\[ \delta_+ (\{ u_j \}_{j \in \Z} ) = \{ u_{j+1} - u_j \}_{j \in \Z}
 \qquad \delta_-(\{ u_j \}_{j \in \Z} ) = \{ u_{j} - u_{j-1} \}_{j \in \Z} 
 \qquad  \delta  = - \rmi ( \delta_+ + \delta_-)/2 \] 
with their corresponding Fourier symbols,
\[ D_+ (\sigma) = \rme^{2 \pi \rmi \sigma} -1, 
\quad D_-(\sigma) = 1 - \rme^{-2 \pi \rmi \sigma} ,
\quad D(\sigma) = \sin (2\pi \sigma).  \]
As was the case for operators defined on $L^2(\R)$, the Fourier
Transform of general convolution operators, $ L(\sigma) $, is a
multiplication symbol defined on the space $L^2(\mathbb{T}_1)$:
\[
\begin{array}{c c c c}
\hat{ \mathcal{L}} : & D( \hat{ \mathcal{L}} ) \subset L^2(\mathbb{T}_1) & \longrightarrow & L^2(\mathbb{T}_1)\\ 
& u(\sigma) & \longmapsto & L(\sigma) u(\sigma)
\end{array}
\]

Here, we make the following assumptions on the Fourier symbol
$L(\sigma)$, of our discrete convolution operators $\mathcal{L}$.
\begin{Hypothesis}\label{h:hypo_lattice}
The symbol $L(\sigma)$ is analytic, uniformly bounded, and 1-periodic
in a strip $\Omega_1 = \R \times (\rmi \sigma_1, \rmi \sigma_1)$ for
some $\sigma_1>0$. Moreover, when restricted to $\sigma \in [-1/2, 1/2]$
the symbol $L(\sigma)$ is invertible except at $\sigma =0$,
where it has a zero of multiplicity $m$.
\end{Hypothesis}

As was the case in the real line, the operator $\mathcal{L}$ can be
decomposed into an invertible operator $\mathcal{M}_L(\sigma)$ and a
Fredholm operator with Fourier symbol 
$ (\rme^{2 \pi \rmi \sigma} -1)^m [ 1 + \rmi C \sin^l(2 \pi \sigma)]^{-1} $, 
which formally corresponds to $\delta_+^m ( 1+ \delta^l)^{-1}$. 
This leads to the following proposition.

\begin{Proposition}\label{p:discrete_fredholm}
For $\gamma \notin \{ 1/2, 3/2, \cdots, m-1/2\}$, and appropriate
value of the integer $l$, the operator $ \mathcal{L} :
m^{2,m}_\gamma(\Z) \longrightarrow h^l_{\gamma+m}(\Z) $ satisfying
Hypothesis \ref{h:hypo_lattice} is Fredholm. Moreover, letting
$\underline{\eta}^b = \{ \eta^b\}_{\eta \in \Z}$, we have that
\begin{itemize}
\item for $\gamma > - 1/2$ the operator is injective with cokernel
\[ \coker = \mathrm{span} \{ \underline{\eta}^\beta \mid \beta = 0,1, \cdots, m-1\}\]
\item for $\gamma < 1/2-m $ the operator is surjective with kernel
\[ \ker = \mathrm{span} \{ \underline{\eta}^\beta \mid \beta = 0,1, \cdots, m-1\}\]
\item for $j - 1/2 -m < \gamma< j- m +1/2 $, with $ j \in \Z $, $1 \leq j <m$ , the operator has kernel
\[ \ker = \mathrm{span} \{ \underline{\eta}^\beta \mid \beta = 0,1, \cdots, m-j-1\}\]
and cokernel
\[ \coker = \mathrm{span} \{ \underline{\eta}^\beta \mid \beta = 0,1, \cdots, j-1\}.\]
\end{itemize}
On the other hand, the operator does not have closed range when
$\gamma \in \{ 1/2, 3/2, \cdots, m-1/2\}$.
\end{Proposition}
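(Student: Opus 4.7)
The plan is to mirror the strategy used in the proof of Proposition~\ref{p:fredholm2} on the real line, adapting it to the lattice setting by replacing the Fourier transform on $\R$ with the discrete one on $\ell^2(\Z)$, whose symbols live on the compact torus $\mathbb{T}_1$. Under Hypothesis~\ref{h:hypo_lattice}, $L(\sigma)$ is analytic and nonvanishing on $\mathbb{T}_1 \setminus \{0\}$ and has a zero of order $m$ at $\sigma=0$. The key technical step, analogous to Lemma~\ref{l:decompositionconv}, is to write
\[
L(\sigma) \;=\; M_L(\sigma)\,\frac{(\rme^{2\pi \rmi\sigma}-1)^m}{1+\rmi C\sin^l(2\pi\sigma)},
\]
where $M_L$ and $M_L^{-1}$ are analytic and uniformly bounded in the strip $\Omega_1$. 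Choosing $l$ large enough to match the behavior of $M_L$ at infinity on the strip, one verifies the ratio is analytic and nonvanishing on $\mathbb{T}_1\setminus\{0\}$, hence $M_L$ inherits these properties.

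Having established this decomposition, I would first show that the multiplier $\mathcal{M}_L$ is an isomorphism on the appropriate weighted space $h^l_{\gamma+m}(\Z)$. This is a weighted discrete Mihlin-type statement: since $M_L$ is smooth and bounded together with its inverse, convolution with the inverse Fourier coefficients preserves the weighted spaces $\ell^2_{\gamma+m}(\Z)$. The polynomial weights interact well with the convolution because the kernel of $\mathcal{M}_L^{\pm 1}$ decays exponentially (by analyticity of the symbol on $\Omega_1$); standard weighted Young-type inequalities then bound $\|\mathcal{M}_L^{\pm 1} \underline{u}\|_{\ell^2_{\gamma+m}} \lesssim \|\underline{u}\|_{\ell^2_{\gamma+m}}$. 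This reduces the Fredholm analysis of $\mathcal{L}$ to that of the normal-form operator $\mathcal{L}_{NF} = \delta_+^m(1+C\delta^l)^{-1}$.

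Next, since $(1 + C\delta^l)^{-1}$ is again invertible on the weighted spaces (its symbol is bounded below away from zero because of the $+1$ term, and the argument above applies), the problem further reduces to the discrete monomial operator $\delta_+^m : m^{2,m}_\gamma(\Z) \to \ell^2_{\gamma+m}(\Z)$. For this operator the formal nullspace consists of sequences satisfying $\delta_+^m \underline{u} = 0$, namely polynomials $\underline{\eta}^\beta$, $\beta = 0,\dots,m-1$. The key combinatorial computation is to determine for which $\gamma$ each $\underline{\eta}^\beta$ actually belongs to $m^{2,m}_\gamma(\Z)$: since $\delta^j \underline{\eta}^\beta$ grows like $\eta^{\beta-j}$ for $j\le\beta$, the weighted $\ell^2$ norm at level $j$ is finite iff $2(\beta-j) + 2(j+\gamma) < -1$, i.e., $\beta + \gamma < -1/2$. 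Comparing this threshold with the three regimes in the statement produces exactly the claimed kernel dimensions. The cokernel is then identified by duality, $\coker \mathcal{L} = \ker \mathcal{L}^*$, with $\mathcal{L}^*$ acting between the dual spaces $h^{-l}_{-\gamma-m}(\Z)$ and $m^{-2,m}_{-\gamma}(\Z)$ and having a conjugate normal form, whose polynomial nullspace is controlled by the dual weight $-\gamma-m$ and gives the stated spanning sets.

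The main obstacle, I expect, is the closed-range question at the threshold values $\gamma \in \{1/2,3/2,\dots,m-1/2\}$, and establishing closed range away from them. For the positive direction, one would run a Fourier-side a priori estimate combined with a compact-perturbation argument: write the operator as a sum of an invertible piece (handling high wavenumbers using Hypothesis~\ref{h:hypo_lattice}) and a compact piece localizing near $\sigma=0$, then use a Lax–Milgram/Peetre-type lemma in the weighted norm. For the failure of closed range at the threshold weights, the plan is to construct Weyl sequences on the lattice by truncating the borderline polynomial $\underline{\eta}^\beta$ against a smooth cutoff $\chi(\eta/n)$, exactly mimicking the continuous Weyl-sequence construction sketched earlier in the paper; the subtle point is verifying that $\|\delta_+^m(\underline{\eta}^\beta \chi(\eta/n))\|_{\ell^2_{\gamma+m}} \to 0$ while the sequence remains bounded below in $m^{2,m}_\gamma(\Z)$ for the critical $\gamma$, which requires careful summation by parts and exploitation of the fact that weighted norms of $\eta^{\beta-j}\chi(\eta/n)$ are only logarithmically divergent at the critical exponent. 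All of these arguments are available in \cite{jaramillo2019} in a more general Hilbert-space-valued setting, so the proof amounts primarily to specializing and translating those results to $Y = \R$ on the lattice.
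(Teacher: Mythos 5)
Your proposal follows essentially the same route as the paper: the paper's (very brief) justification is precisely the decomposition of $L(\sigma)$ into an invertible multiplier $M_L(\sigma)$ times the normal form $(\rme^{2\pi\rmi\sigma}-1)^m[1+\rmi C\sin^l(2\pi\sigma)]^{-1}$, with the Fredholm bookkeeping for $\delta_+^m$ and the threshold weights imported from \cite{jaramillo2019}. Your filled-in details (the weighted-norm criterion $\beta+\gamma<-1/2$ for membership of $\underline{\eta}^\beta$, duality for the cokernel, and the discrete Weyl sequences at critical $\gamma$) are consistent with that reference and with the heuristic the paper gives for the continuous case.
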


From the above result we obtain the next corollary, which we will use
in the proof of convergence for the numerical schemes. In particular,
the following result gives us information about the decay rate of
solutions to the discrete convolution problem defined over
$\ell^2(\Z)$
\[ \mathcal{L} \ast \underline{u} = \underline{ f}. \]

\begin{Corollary}\label{c:uniqueness_d}
Consider the discrete convolution operator $\mathcal{L}$, with Fourier
symbol $(\rme^{2 \pi \rmi \sigma} -1)^2 [1+ (2\rmi \sin (2\pi \sigma)^2]^{-1}$, 
and defined as
\[
\begin{array}{c c c}
 \mathscr{L}: m^{2,2}_{\gamma}(\Z) & \longrightarrow & h^{2}_{\gamma+2}(\Z)\\ 
 \underline{u} &\longmapsto & \mathcal{L} \ast \underline{u}.
 \end{array}
\]
Suppose $\gamma > -1/2 $, then the equation $\mathcal{L} \ast \underline{u} =\underline{f}$
has a unique solution, with $|u_j| < C |j| ^{1/2-(\gamma+1)}$ for large $|j|$, 
provided the right hand side $\underline{f} \in h^2(\Z)_{\gamma+2}$ satisfies
\[ \langle \underline{f}, \underline{ 1} \rangle = 0 
\qquad \mbox{and} 
\qquad \langle \underline{f}, \underline{\eta} \rangle = 0.\]
\end{Corollary}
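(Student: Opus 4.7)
The plan is to reduce the corollary to Proposition \ref{p:discrete_fredholm} and a discrete analogue of Lemma \ref{l:decay}, mimicking the continuous proof of Corollary \ref{c:invertible_con}. First, I would apply Proposition \ref{p:discrete_fredholm} with $m=2$ and $p=2$ (so the conjugate exponent is $q=2$). The hypothesis $\gamma > -1/2$ places us exactly in the regime $\gamma > -1 + 1/q$, and the proposition asserts that
\[ \mathscr{L}: m^{2,2}_\gamma(\Z) \longrightarrow h^2_{\gamma+2}(\Z) \]
is Fredholm and injective, with cokernel spanned by the constant and linear lattice sequences $\{\underline{1}, \underline{\eta}\}$. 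Uniqueness of the solution then follows immediately from injectivity.

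For existence, I would use the standard fact that the range of a Fredholm operator coincides with the annihilator of its cokernel under the duality pairing. Since the cokernel is spanned by $\underline{1}$ and $\underline{\eta}$, the two compatibility conditions $\langle \underline{f}, \underline{1}\rangle = 0$ and $\langle \underline{f}, \underline{\eta}\rangle = 0$ are exactly the conditions needed to guarantee that $\underline{f}$ lies in the range of $\mathscr{L}$, producing the desired $\underline{u}\in m^{2,2}_\gamma(\Z)$.

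For the asymptotic decay, I would establish a discrete analogue of Lemma \ref{l:decay}. Since $\underline{u}\in m^{2,2}_\gamma(\Z)$ the sequence $\delta_+\underline{u}$ belongs to $\ell^2_{\gamma+1}(\Z)$, and because $\gamma+1 > 1/2$ one checks that $u_j\to 0$ as $|j|\to\infty$, so that for large positive $j$ one can write $u_j = -\sum_{k\geq j}(\delta_+\underline{u})_k$. Inserting a weight and applying the discrete Hölder inequality gives
\[ |u_j| \leq \Bigl(\sum_{k\geq j}|(\delta_+\underline{u})_k|^2(1+k^2)^{\gamma+1}\Bigr)^{1/2}\Bigl(\sum_{k\geq j}(1+k^2)^{-(\gamma+1)}\Bigr)^{1/2}. \]
The first factor is bounded by $\|\delta_+\underline{u}\|_{\ell^2_{\gamma+1}}$, while the tail sum in the second factor behaves like $|j|^{1-2(\gamma+1)}$, yielding $|u_j|\leq C|j|^{1/2-(\gamma+1)}$, with the symmetric estimate for $j\to -\infty$.

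The main obstacle is the decay estimate: everything else is a direct bookkeeping application of Proposition \ref{p:discrete_fredholm}. The subtlety in the Hölder step is that the weighted tail sum is only summable because $\gamma>-1/2$ (so $2(\gamma+1)>1$), which is precisely the hypothesis of the corollary; the boundary case is explicitly excluded by the Fredholm dichotomy in Proposition \ref{p:discrete_fredholm}. One must also confirm that the telescoping identity $u_j=-\sum_{k\geq j}(\delta_+\underline{u})_k$ is legitimate, which reduces to verifying that $\underline{u}$ tends to zero at infinity, a consequence of membership in $m^{2,2}_\gamma$ with $\gamma > -1/2$ together with the same Hölder bound applied to $\delta_+\underline{u}\in \ell^2_{\gamma+1}$.
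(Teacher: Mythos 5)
Your proposal is correct and follows essentially the same route the paper intends: Corollary \ref{c:uniqueness_d} is presented as a direct consequence of Proposition \ref{p:discrete_fredholm} (injectivity plus cokernel spanned by $\underline{1}$ and $\underline{\eta}$, with the compatibility conditions placing $\underline{f}$ in the closed range), combined with a discrete analogue of Lemma \ref{l:decay} for the decay rate. Your telescoping-plus-H\"older argument, including the observation that $\gamma>-1/2$ is exactly what makes the weighted tail sum finite and forces $u_j\to 0$, is the discrete counterpart of the paper's one-line proof of Lemma \ref{l:decay} and fills in the details the paper leaves implicit.
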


\section{Calculation of $w_j$}\label{a:weights_calc}
Here we derive formulas for the weights $w_j$ considered
earlier. Assume that $\nu(y)$ has two integrable
antiderivatives. Namely, assume there exists a function $F$ such that
$F''(y) = \nu(y)$. We can then use integration by parts to show
the following results.

In the first case, assume $1 < |j| < M$, then
\begin{align*}
\int_{h \leq |y| \leq L_W} T(y-x_j) \nu(y) \; dy &= \int^{x_{j+1}}_{x_{j-1}} T(y-x_j) \nu(y) \; dy\\[.2cm]
&= \int^{h}_{-h} T(z) \nu(z+x_j) \; dz\\[.2cm]
&= \frac{1}{h}\bigg[F(x_{j+1}) -2F(x_{j}) + F(x_{j-1})\bigg].
\end{align*}
The second line is obtained by the change of variables $z = y - x_j$. The
third line is obtained by doing integration by parts twice and using $\nu = F^{''}$.\\

When $j=1$, we don't integrate over the full hat function because half
of the hat function is not in the domain of $|y|\geq h$. It reads:
\begin{align*}
\int_{h \leq |y| \leq L_W} T(y-x_1) \nu(y) \; dy &= \int^{x_{2}}_{x_{1}} T(y-x_1) \nu(y) \; dy\\[.2cm]
&= \int^{h}_{0} T(z) \nu(z+x_1) \; dz\\[.2cm]
&= \-F'(x_1) + \frac{1}{h}\bigg[F(x_{2}) -F(x_{1})\bigg].
\end{align*}

When $j = M$ a similar argument shows
\[\int_{h \leq |y| \leq L_W} T(y-x_M) \nu(y) \; dy =F'(x_M) + \frac{1}{h}[F(x_{M-1}) -F(x_{M})].\]

Finally, because both $\nu(y)$ and $T(y)$ are even we have
\begin{align*}
\int_{h \leq |y| \leq L_W} T(y-x_{-j})\nu(y) \; dy &= \int_{h \leq |y|  \leq L_W} T(y+x_{j})\nu(y) \; dy \\[.2cm]
&= \int_{h \leq |z| \leq L_W} T(-z+x_{j})\nu(-z) \; dz\\[.2cm]
&= \int_{h \leq |z| \leq L_W} T(z-x_{j})\nu(z) \; dz .
\end{align*}
Looking at the definitions in the previous sections, this immediately
gives
\begin{equation*}
w_{-j} = w_j.
\end{equation*} 

To summarize, we've shown that if the kernel $\nu$ has the appropriate
antiderivatives $F$ then the weights $w_j$ are given explicitly by
\begin{equation*}
w_j =
\begin{cases}
f_1(h) - F'(x_1) + \frac{1}{h}[F(x_{2}) -F(x_{1})], &|j| =  1 \\[.2cm]
\frac{1}{h}[F(x_{j+1}) - 2F(x_{j})+F(x_{j-1})], & 1< |j| <M \\[.2cm]
F'(x_M) + \frac{1}{h}[F(x_{M-1}) -F(x_{M})], & |j| = M
\end{cases}
\end{equation*}
with $w_0=0$.

\bibliographystyle{siamplain}
\bibliography{nonlocalBIB}

\end{document}